\newtheorem{theorem}{Theorem}[section]
\newtheorem{lemma}[theorem]{Lemma}
\newtheorem{proposition}[theorem]{Proposition}
\newtheorem{definition}[theorem]{Definition}
\newtheorem{remark}[theorem]{Remark}
\newtheorem{problem}{Problem}[section]
\newcommand{\f}{\mathscr{F}}
\newcommand{\lr}{\mathcal{L}}
\newcommand{\sr}{\mathcal{S}}
\newcommand{\e}{\mathbb{E}}
\newcommand{\br}{\mathbb{R}}
\newcommand{\pr}{\mathcal{P}}
\newcommand{\s}{\mathbb{S}}
\newcommand{\ii}{\mathcal{I}}
\newcommand{\iii}{\mathbb{I}}
\newcommand{\argmin}{\mathop{\rm argmin}}
\newcommand{\de}{\Delta}
\definecolor{DSgray}{cmyk}{0,1,0,0}
\title{Mean Field Games with Major and Minor Agents: the Limiting Problem and Nash Equilibrium}
\author{Ziyu Huang\footnotemark[1] \and Shanjian Tang\footnotemark[2]} 
\date{}
\begin{document}

\maketitle

\renewcommand{\thefootnote}{\fnsymbol{footnote}}

\footnotetext[1]{School of Mathematical Sciences, Fudan University, Shanghai 200433, China. E-mail: zyhuang19@fudan.edu.cn.}
\footnotetext[2]{Department of Finance and Control Sciences, School of Mathematical Sciences, Fudan University, Shanghai 200433, China. E-mail: sjtang@fudan.edu.cn. Research partially supported by
	National Natural Science Foundation of China (Grants No. 11631004 and No. 12031009).}

\begin{abstract}
In this paper, we consider  a mean field game (MFG) with a major and $N$ minor agents. We first consider the limiting problem and allow the coefficients to vary with the conditional distribution in a nonlinear way. We use the stochastic maximum principle to transform the limiting control problem into a system of two coupled conditional distribution dependent forward–backward stochastic differential equations (FBSDEs), and prove the existence and uniqueness result of the FBSDEs when the dependence between major agent and minor agents is sufficiently weak. We then use the solution of the limiting problem to construct an $\mathcal{O}(N^{-\frac{1}{2}})$-Nash equilibrium for the MFG with a major and $N$ minor agents. 

\noindent{\textbf{Keywords:}} Mean field games; major and minor agents; Nash equilibrium; FBSDEs.

\noindent {\bf 2000 MR Subject Classification } 93E20, 60H30, 60H10, 49N70, 49J99
\end{abstract}

\section{Introduction}
Let $(\Omega,\mathbb{P})$ denote a complete probability space on which $(N+1)$ one-dimensional independent Brownian motions $\{W^i,\ W^i=(W^i_t)_{t\in [0,T]},\ 0\le i\le N\}$ are defined. In this paper, we consider mean field games (MFGs) with one major agent and $N$ minor agents. The major agent regulates his/her own state process $X_t^0$ governed by
\begin{equation}\label{pr_1}
	X_t^0=\xi^0+\int_0^t b^0\left(s,X_s^0,u_s^0,m^N_s\right)ds+\int_0^t\sigma^0\left(s,X_s^0,u_s^0,m^N_s\right)dW_s^0, \quad t\in[0,T],
\end{equation}
via the control process $u^0\in\lr_{\f^0}^2(0,T)$, and the $i$-th minor agent regulates his/her own state process $X_t^i$ governed by
\begin{equation}\label{pr_2}
	\begin{split}
		X_t^i=\xi^i &+\int_0^t b\left(s,X_s^i,u_s^i,m^N_s\right)ds+\int_0^t\sigma\left(s,X_s^i,u_s^i,m^N_s\right)dW_s^i\\
		&+\int_0^t\tilde{\sigma}\left(s,X_s^i,u_s^i,m^N_s\right)d{W}^0_s, \quad t\in[0,T],
	\end{split}
\end{equation}
via the control process $u^i\in \lr_{\f^i}^2(0,T)$, where $\{\xi^i,\ 0\le i\le N\}$ are square integrable random variables and $m_t^N$ is the empirical distribution of $\{X_t^i,1\le i\le N\}$, i.e.,
\begin{equation*}
	m_t^N(dx)=\frac{1}{N}\sum_{i=1}^N\delta_{X_t^i}(dx),\quad t\in[0,T].
\end{equation*}
Here, $\f^0$ is the natural filtration of $(\xi^0,{W}^0)$ and $\f^i$ is the natural filtration of $(\xi^0,\xi^i,W^0,W^i)$ for $1\le i\le N$ augmented by all the $\mathbb{P}$-null sets. The major agent minimizes his/her expected total cost given by
\begin{equation}\label{pr_3}
	J_0\left(u^0|u^{-0}\right):=\mathbb{E}\left[\int_0^T f^0\left(t,X_t^0,u_t^0,m_t^N\right)dt+g^0\left(X^0_T,m_T^N\right)\right],
\end{equation}
and the $i$-th agent minimizes his/her expected total cost given by
\begin{equation}\label{pr_4}
	J_i\left(u^i|u^{-i}\right):=\mathbb{E}\left[\int_0^T f\left(t,X_t^i,u_t^i,m_t^N,X_t^0\right)dt+g\left(X^i_T,m_T^N,X_T^0\right)\right],
\end{equation}
where $u^{-i}$ denotes a strategy profile of other agents excluding the $i$-th agent. We are seeking a type of equilibrium solution widely used in game theory setting called Nash equilibrium. Searching for a Nash equilibrium for an $(N+1)$-player game is known to be intractable when $N$ is large due to the high dimensionality. However, in view of the theory of McKean-Vlasov limits and propagation of chaos for uncontrolled weakly interacting particle systems \cite{AS}, it is expected to have a convergence for $(N+1)$-player Nash equilibria. The limiting system is more tractable and we can use its solution to approximate the Nash equilibrium of the finite-player games. 

MFGs with major and minor agents were first introduced by Huang \cite{MH2}. He considered a linear-quadratic infinite-horizon model in which the influence of major agent will not fade away when the number of minor agents tends to infinity. Nourian and Caines \cite{NM} generalized this model to the nonlinear case using dynamic programming  principle. Carmona and Zhu \cite{CRXZ} developed a systematic scheme to find approximate Nash equilibria for the finite-player games using a probabilistic approach. Hu \cite{HFC} studied stochastic LQ games involving a major and a large number of minor agents. We also refer to \cite{PA,MH1,VN} for construction of $\epsilon$-Nash equilibria for $N$-player games and \cite{SA,SAWR,common_3,AB2,common_1,common_2,GW,HT} for MFGs with common noise. There are two major issues of interest in MFGs with major and minor agents: (I) solvability of the limiting problem of MFG with major and minor agents, (II) convergence result for approximate Nash equilibria for $(N+1)$-player game as $N\to\infty$. The goal of this paper is to answer both questions (I) and (II) for our model.

First we discuss issue (I). Nourian and Caines \cite{NM} studied MFGs with major and minor agents assuming that the diffusion of the major agent's system is independent of the states of the major and $N$ minor agents, and the noise of the major agent does not appear in the state dynamic of minor agents. Carmona and Zhu \cite{CRXZ} used the stochastic maximum principle to reduce the solution of the MFG with major and minor agents to a forward-backward system of stochastic differential equations (FBSDEs) of the conditional McKean-Vlasov type. They proved the solvability of the linear quadratic setting, in which the diffusions of all the agents are assumed to be constants. In this paper, we allow the state coefficients to vary with the conditional distribution in a nonlinear way and allow the noise of the major agent to appear in the state dynamics of all minor agents. We allow the diffusion of an agent's state dynamic to depend on its own state, control, and the conditional distribution of all minor agents' state. We use the stochastic maximum principle to transform the limiting control problem into a system of two coupled conditional distribution dependent FBSDEs (see Theorems~\ref{thm:smp} and \ref{thm:1}). One of our main result is the existence and uniqueness of solutions of the FBSDEs when the dependence between major agent and minor agents is sufficiently weak, and the convexity parameter of the running cost of minor agents on the control is sufficiently large or the dependence of minor agents' state coefficients on the conditional distribution is sufficiently weak (see Theorem~\ref{main2_thm}). A weak monotonicity property is required for minor agents' cost functions. The proof of the existence and uniqueness of solutions of the FBSDEs is based on a continuation method in coefficients, which was developed by Hu and Peng \cite{YH2}. 

Next we discuss issue (II). Nourian and Caines \cite{NM} constructed a $\mathcal{O}(N^{-\frac{1}{2}})$-Nash equilibrium for a MFG with a major and $N$ minor agents for the case that controls of minor agents are adapted to the natural filtration of the major agent. Carmona and Zhu \cite{CRXZ} proved that the optimal control of the McKean-Vlasov dynamics forms an $\mathcal{O}(N^{\frac{-1}{d+4}})$-Nash equilibrium using results of propagation of chaos, where $d$ is the dimension of state space. In this paper, we show that the solution of the limit problem can provide an $\mathcal{O}(N^{-\frac{1}{2}})$-Nash equilibrium for MFG with a major and $N$ minor agents where control of the $i$-th agent is adapted to the natural filtration generated by his/her own noise and major agent's noise (see Theorem~\ref{thm:nash}).

The paper is organized as follows. In Section~\ref{problem_formulation}, we state our problems, assumptions and main results. In Section~\ref{pro1}, we use the stochastic maximum principle to transform the limiting control problem into a system of two coupled conditional distribution dependent FBSDEs, and give the existence and uniqueness result of the FBSDEs. In Section~\ref{sec_nash}, we use the solution of the limit problem to construct an $\mathcal{O}(N^{-\frac{1}{2}})$-Nash equilibrium for MFG with a major and $N$ minor agents. 

\subsection{Notations}
Let $(\Omega,\f,\mathbb{P})$ denote a complete filtered probability space and $\f^0$ be a sub-filtration of $\f$. We denote by $\lr(\cdot|\f^0_t)$ the law conditioned at $\f^0_t$ for $t\in[0,T]$. Let $\lr^2_{\f_t}$ denote the set of all $\f_t$-measurable square-integrable random variables. Let $\lr^2_{\f}(0,T)$ denote the set of all $\f_t$-progressively-measurable processes $\alpha$ such that $\e\big[\int_0^T |\alpha_t|^2dt\big]<+\infty$. Let $\mathcal{S}^2_{\f}(0,T)$ denote the set of all $\f_t$-progressively-measurable processes $\alpha$ such that $\e[\sup_{0\le t\le T} |\alpha_t|^2]<+\infty$. Let $\pr(\br)$ denote the space of all Borel probability measures on $\br$, and $\pr_2(\br)$ the space of all probability measures $m\in\pr(\br)$ such that 
\begin{equation*}
	|m|_{2}:=\sqrt{\int_\br x^2dm(x)}<+\infty.
\end{equation*}
The Wasserstein distance on $\pr_2(\br)$ is defined by
\begin{equation*}
	W_2(m_1,m_2)=\sqrt{\inf_{\gamma\in\Gamma(m_1,m_2)}\int_{\br^2}|x(\omega_1)-x(\omega_2)|^2 d\gamma(\omega_1,\omega_2)},\quad m_1,m_2\in\pr_2(\br),
\end{equation*}
where $\Gamma(m_1,m_2)$ denotes the collection of all probability measures on $\br^2$ with marginals $m_1$ and $m_2$. The space $(\pr_2(\br),W_2)$ is a complete separable metric space.  Let $\pr_{\f^0}^2(0,T)$ denote the set of all $\f^0_t$-progressively-measurable stochastic flow of probability measures $m=\{m_t,0\le t\le T\}$ such that $\e[\sup_{0\le t\le T} |m_t|_2^2]<+\infty$.

\section{Problems, assumptions and main results}\label{problem_formulation}
Let $T>0$ be a fixed terminal time, and 
\begin{align*}
	&b^0,\sigma^0,b,\sigma,\tilde{\sigma},f^0:[0,T]\times\br\times\br\times\pr(\br)\to\br,\\ 
	&f:[0,T]\times\br\times\br\times\pr(\br)\times\br\to\br,\\ &g^0:\br\times\pr(\br)\to\br,\qquad g:\br\times\pr(\br)\times\br\to\br.
\end{align*}
We assume that $\{\xi^i,\ 0\le i\le N\}$ are square integrable, independent, identically distributed and independent of all Brownian motions. We first give the definition of Nash equilibrium for stochastic dynamic game \eqref{pr_1}-\eqref{pr_4}.
\begin{definition}
	We call a set of admissible strategies $\{u^i,\ 0\le i\le N\}$ a Nash equilibrium for $(N+1)$ agents, if  for any $0\le i\le N$, $u^i$ is optimal for the $i$-th agent given the other agents's strategies $\{u^j,j\neq i\}$. In other words,
	\begin{equation*}
		J_i\left(u^i|u^{-i}\right)=\min_{u\in\lr_{\f^i}^2(0,T)}J_i\left(u|u^{-i}\right), \quad 0\le i\le N.
	\end{equation*}
	Given $\epsilon>0$, we call a set of admissible strategies $\{u^i,\ 0\le i\le N\}$ an $\epsilon$-Nash equilibrium, if 
	\begin{equation*}
		J_i\left(u_i|{u}_{-i}\right)-\epsilon\le\inf_{u\in\lr_{\f^i}^2(0,T)}J_i\left(u|{u}^{-i}\right)\quad 0\le i\le N.
	\end{equation*}
\end{definition}

One of the main problems of our paper is stated as follows.

\begin{problem}\label{def1}
	Find an $\epsilon$-Nash equilibrium for MFGs with a major and $N$ minor agents.
\end{problem}

Searching for a Nash equilibrium for an $(N+1)$-player game is intractable when $N$ is very large due to the high dimensionality. Therefore, we consider the limiting problem first by taking the limit when $N$ goes to infinity. We assume that each minor agent adopts the same strategy. Therefore, minor agents' distribution can be represented by a law of a single representative minor agent conditional on $\f^0$. Let ${W}=\{W_t,0\le t\le T\}$ be a one-dimensional Brownian motion and $\xi$ be a square integrable random variable, which are both independent of $(W^0,\xi^0)$. Let $\f$ be the natural filtration of $(\xi^0,\xi,W^0,W)$ augmented by all $\mathbb{P}$-null sets. The limiting problem of MFGs with major and minor agents is stated as follows.

\begin{problem}\label{def2}
	Find a solution $(\hat{u}^0,\hat{u})\in \lr_{\f^0}^2(0,T)\times\lr^2_\f(0,T)$ for the stochastic control problem
	\begin{equation*}
		\left\{
		\begin{aligned}
			&\hat{u}^0\in\argmin_{u^0\in\lr^2_{\f^0}(0,T)}J_0\left(u^0|m\right):=\e\left[\int_0^T f^0\left(t,X^{0,u^0}_t,u^0_t,m_t\right)dt+g^0\left(X^{0,u^0}_T,m_T\right)\right],\\
			&\hat{u}\in\argmin_{u\in\lr^2_{\f}(0,T)}J\left(u|\hat{u}^0,m\right):=\e\left[\int_0^T f\left(t,X^u_t,u_t,m_t,X_t^{0,\hat{u}^0}\right)dt+g\left(X^u_T,m_T,X_T^{0,\hat{u}^0}\right)\right];\\
			&X^{0,u^0}_t=\xi^0+\int_0^t b^0\left(s,X^{0,u^0}_s,u^0_s,m_s\right)ds+\int_0^t\sigma^0\left(s,X^{0,u^0}_s,u^0_s,m_s\right)dW^0_s,\\
			&X^u_t=\xi+\int_0^t b\left(s,X^u_s,u_s,m_s\right)ds+\int_0^t\sigma\left(s,X^u_s,u_s,m_s\right)dW_s\\
			&\qquad\quad\ +\int_0^t\tilde{\sigma}\left(s,X^u_s,u_s,m_s\right)dW^0_s,\quad  t\in[0,T];\\
			&m_t=\lr\left(X^{\hat{u}}_t|{\mathscr{F}}^0_t\right),\quad  t\in[0,T]. 
		\end{aligned}
		\right.
	\end{equation*}
\end{problem}

To solve Problem~\ref{def2}, we need the following assumptions. For notational convenience, we use the same constant $L\geq 1$ for all the conditions below, and assume that $\max\{L_m,l_m,l_{x^0}\}\le L$.

\textbf{(H1)} The functions $(b^0,\sigma^0,b,\sigma,\tilde{\sigma})$ are linear in  $x$ and $u$. That is, 
\begin{align*}
	\phi(t,x,u,m)=\phi_0(t,m)+\phi_1(t)x+\phi_2(t)u,\quad \phi=b^0,\sigma^0,b,\sigma,\tilde{\sigma},\quad \phi_i=b^0_i,\sigma_i^0,b_i,\sigma_i,\tilde{\sigma}_i,
\end{align*}
with $\phi_0$ growing linearly in $m$ and $(\phi_1,\phi_2)$ being bounded by $L$. The functions $(b_0,\sigma_0,\tilde{\sigma}_0)$ are $L_m$-Lipschitz continuous in $m$, and the functions $(b_0^0,\sigma_0^0)$ are $l_m$-Lipschitz continuous in $m$,

\textbf{(H2)} The functions $(f^0,g^0)(t,0,0,m)$ satisfy a quadratic growth condition in $m$. The functions $(f^0,g^0)$ are differentiable in $(x^0,u^0)$, with the derivatives growing linearly in $(x^0,u^0,m)$, being $L$-Lipschitz continuous in $(x^0,u^0)$ and being $l_m$-Lipschitz continuous in $m$. The functions $(f,g)(t,0,0,m,x^0)$ satisfy a quadratic growth condition in $(m,x^0)$. The function $f$ is of the form
\begin{align*}
	f\left(t,x,u,m,x^0\right)=f^1\left(t,x,u,x^0\right)+f^2\left(t,x,m,x^0\right)
\end{align*}
for $(t,x,u,m,x^0)\in [0,T]\times\br\times\br\times\pr_2(\br)\times\br$. The functions $(f^1,f^2,g)$ are differentiable in $(x,u)$, with the derivatives growing linearly in $(x,u,m,x^0)$, being $L$-Lipschitz continuous in $(x,u,m)$ and being $l_{x^0}$-Lipschitz continuous in $x^0$.

\textbf{(H3)} The function $g^0$ is convex in $x^0$ and the function $f^0$ is jointly convex in $(x^0,u^0)$ with a strict convexity in $u^0$, in such a way that, for some $C_{f^0}\geq L^{-1}$, 
\begin{equation*}
	\begin{split}
		&f^0\left(t,x_2^{0},u_2^{0},m\right)-f^0\left(t,x_1^0,u_1^0,m\right)-\left(f^0_{x^0},f^0_{u^0}\right)\left(t,x_1^{0},u_1^0,m\right)\cdot\left(x_2^{0}-x_1^0,u_2^{0}-u_1^0\right)\\
		\geq\  &C_{f^0}\left|u_2^{0}-u_1^0\right|^2,\qquad (t,m)\in[0,T]\times\pr_2(\br),\quad x_1^{0},x_2^{0},u_1^{0},u_2^{0}\in\br.
	\end{split}
\end{equation*}
The functions $(f^2,g)$ are convex in $x$ and the function $f_1$ is jointly convex in $(x,u)$ with a strict convexity in $u$, in such a way that, for some $C_{f}\geq L^{-1}$, 
\begin{equation*}
	\begin{split}
		&f^1\left(t,x_2,u_2,x^0\right)-f^1\left(t,x_1,u_1,x^0\right)-\left(f^1_{x},f^1_{u}\right)\left(t,x_1,u_1,x^0\right)\cdot\left(x_2-x_1,u_2-u_1\right)\\
		&\qquad \geq C_{f}|u_2-u_1|^2,\qquad (t,x^0)\in[0,T]\times\br,\quad x_1,x_2,u_1,u_2\in\br.
	\end{split}
\end{equation*}

\textbf{(H4)} For any $\gamma\in \pr_2(\br^2)$ with marginals $m,m'\in\pr_2(\br)$,
\begin{equation*}
	\begin{split}
		&\int_{\br^2}\left[\left(f^2_{x}\left(t,x,m,x^0\right)-f^2_{x}\left(t,y,m',x^0\right)\right)(x-y)\right]\gamma(dx,dy)\geq 0,\quad \left(t,x^0\right)\in[0,T]\times\br,\\
		&\int_{\br^2}\left[\left(g_x\left(x,m,x^0\right)-g_x\left(y,m',x^0\right)\right)(x-y)\right]\gamma(dx,dy)\geq 0,\quad x^0\in\br.   
	\end{split}
\end{equation*}
Equivalently, for any square-integrable random variables $\xi$ and $\xi'$ on the same probability space,
\begin{equation*}
	\begin{split}
		&\e \left[\left(f^2_{x}\left(t,\xi',\lr(\xi'),x^0\right)-f^2_{x}\left(t,\xi,\lr(\xi),x^0\right)\right)\left(\xi'-\xi\right)\right]\geq 0,\quad \left(t,x^0\right)\in [0,T]\times \br,\\
		&\e \left[\left(g_x\left(\xi',\lr(\xi'),x^0\right)-g_x\left(\xi,\lr(\xi),x^0\right)\right)\left(\xi'-\xi\right)\right]\geq 0,\quad x^0\in \br.    
	\end{split}
\end{equation*}

One of our main results is stated as follows.

\begin{theorem}\label{thm_main_1}
	Let Assumptions (H1)-(H4) be satisfied. Then, there exist $\delta>0$ depending only on $(L,T)$, such that
	\begin{align*}
		\left(\hat{u}^0,\hat{u}\right):=\left\{\left(\hat{u}^0\left(t,{X}^0_t,{p}^0_t,{q}^0_t,\lr(X_t|{\f}_t^0)\right),\hat{u}\left(t,{X}_t,{p}_t,{q}_t,{\tilde{q}}_t,X^0_t\right)\right),\ 0\le t\le T\right\}
	\end{align*}
	is the unique solution of Problem~\ref{def2} when $L_mC_f^{-1}\vee l_{x^0}l_m\le\delta$, where 
	\begin{align*}
		\left({X}^0,{p}^0,{q}^0,X,p,q,\tilde{q}\right)\in & \left(\sr_{\f^0}^2\times\sr_{\f^0}^2 \times\lr_{\f^0}^2\times\sr_{\f}^2\times\sr_{\f}^2\times\lr_{\f}^2\times\lr_{\f}^2\right)(0,T)
	\end{align*}
	is the unique solution of the following FBSDEs
	\begin{equation}\label{fbsde2}
		\left\{
		\begin{aligned}
			&dX^0_t=b^0\left(t,X^0_t,\hat{u}^0(t,X^0_t,p^0_t,q^0_t,\lr(X_t|{\f}_t^0)),\lr(X_t|{\f}_t^0)\right)dt\\
			&\qquad\quad+\sigma^0\left(t,X^0_t,\hat{u}^0(t,X^0_t,p^0_t,q^0_t,\lr(X_t|{\f}_t^0)),\lr(X_t|{\f}_t^0)\right)dW^0_t,\quad t\in(0,T];\\
			&dp^0_t=- H_{x^0}^0\left(t,X^0_t,p^0_t,q^0_t,\hat{u}^0(t,X^0_t,p^0_t,q^0_t,\lr(X_t|{\f}_t^0)),\lr(X_t|{\f}_t^0)\right)dt\\
			&\qquad\quad+q_t^0dW_t^0,\quad t\in[0,T);\\
			&dX_t=b\left(t,X_t,\hat{u}(t,X_t,p_t,q_t,\tilde{q}_t,X^0_t),\lr(X_t|{\f}_t^0)\right)dt\\
			&\qquad\quad+\sigma\left(t,X_t,\hat{u}(t,X_t,p_t,q_t,\tilde{q}_t,X_t^0),\lr(X_t|{\f}_t^0)\right)dW_t\\
			&\qquad\quad+\tilde{\sigma}\left(t,X_t,\hat{u}(t,X_t,p_t,q_t,\tilde{q}_t,X^0_t),\lr(X_t|{\f}_t^0)\right)d{W}^0_t,\quad t\in(0,T];\\
			&dp_t=- H_x\left(t,X_t,p_t,q_t,\tilde{q}_t,\hat{u}(t,X_t,p_t,q_t,\tilde{q}_t,X^0_t),\lr(X_t|{\f}_t^0),{X}^0_t\right)dt\\
			&\qquad\quad+q_tdW_t+\tilde{q}_td{W}^0_t,\quad t\in[0,T);\\
			&X^0_0=\xi^0,\  p^0_T=g_{x^0}^0(X^0_T,\lr(X_T|{\f}_T^0)),\  X_0=\xi,\  p_T=g_x(X_T,\lr(X_T|{\f}_T^0),X^0_T).
		\end{aligned}
		\right.
	\end{equation}
	Here, the Hamiltonians $(H^0,H)$ are defined in \eqref{H} and the functions $(\hat{u}^0(\cdot),\hat{u}(\cdot))$ are defined in \eqref{u}.
\end{theorem}

The solvability of FBSDEs \eqref{fbsde2} is proved in Theorem~\ref{main2_thm}. The existence result in Theorem~\ref{thm_main_1} is consequence of Theorem~\ref{thm:smp} and the existence of solutions of FBSDEs \eqref{fbsde2}. The uniqueness result in Theorem~\ref{thm_main_1} is a consequence of Theorem~\ref{thm:1} and the uniqueness of solutions of FBSDEs \eqref{fbsde2}. To solve Problem~\ref{def1}, we need the following additional assumption:

\textbf{(H5)} The functions $\left(b^0_0,\sigma_0^0,b_0,\sigma_0,\tilde{\sigma}_0,f^0,g^0,f,g\right)$ depend on $m\in\pr(\br)$ in a scalar form: for $\phi=b^0_0,\sigma_0^0,b_0,\sigma_0,\tilde{\sigma}_0,f^0,g^0,f,g$,
\begin{align*}
	&\phi(t,x,u,m,x^0)=\int_\br \overline{\phi}\left(t,x,u,y,x^0\right)m(dy),\\ 
	&\qquad \left(t,x,u,m,x^0\right)\in[0,T]\times\br\times\br\times\pr(\br)\times\br,
\end{align*}
with $\overline{\phi}$ being $L$-Lipschitz contimuous in $y\in\br$. And functions  $(\overline{f^0},\overline{g^0},\overline{f},\overline{g})$ satisfy the convexity conditions in $(H3)$.

Then, we have the following result for  Problem~\ref{def1}.
\begin{theorem}\label{thm_main_2}
	Let Assumptions (H1)-(H5) be satisfied. Then, a solution of Problem~\ref{def2} forms an $\mathcal{O}(\frac{1}{\sqrt{N}})$-Nash equilibrium for Problem~\ref{def1}.
\end{theorem}

Theorem~\ref{thm_main_2} is a consequence of Theorem~\ref{thm:nash}. In the next two sections, we solve Problems~\ref{def2} and \ref{def1}, respectively, and give the proofs of Theorems~\ref{thm_main_1} and \ref{thm_main_2}.

\section{Solvability of Problem~\ref{def2}}\label{pro1}

In this section, we consider Problem~\ref{def2} and give the proof of Theorem~\ref{thm_main_1}. We always suppose that assumptions (H1)-(H4) hold true. We begin with  the stochastic maximum principle for Problem~\ref{def2}. 

\subsection{Stochastic Maximum Principle}\label{SMP}
The stochastic maximum principle gives optimality conditions for the existence of an optimal control in terms of solvability of the adjoint process as a backward stochastic differential equation (BSDE). For more details about stochastic maximum principle and Pontryagin principle, we refer to \cite{HP,JYXY}. We first define the following two stochastic control problems $\mathbf{P}^m$ and $\mathbf{P}^{X^0,m}$. Given $m\in\pr_{\f^0}^2(0,T)$, $\mathbf{P}^m$ is defined as 
\begin{equation*}
	\left\{
	\begin{aligned}
		&\hat{u}^0\in\argmin_{u^0\in\lr^2_{\f^0}(0,T)}\e\left[\int_0^T f^0\left(t,X^{0,u^0}_t,u^0_t,m_t\right)dt+g^0\left(X^{0,u^0}_T,m_T\right)\right],\\
		&X^{0,u^0}_t=\xi^0+\int_0^t b^0\left(s,X^{0,u^0}_s,u^0_s,m_s\right)ds+\int_0^t\sigma^0\left(s,X^{0,u^0}_s,u^0_s,m_s\right)dW^0_s,\quad t\in[0,T].
	\end{aligned}
	\right.
\end{equation*}
Given $(m,X^0)\in\pr_{\f^0}^2(0,T)\times\sr_{\f^0}^2(0,T)$, $\mathbf{P}^{X^0,m}$ is defined as
\begin{equation*}
	\left\{
	\begin{aligned}
		&\hat{u}\in\argmin_{u\in\lr^2_{\f}(0,T)}\e\left[\int_0^T f\left(t,X^u_t,u_t,m_t,X_t^{0}\right)dt+g\left(X^u_T,m_T,X_T^{0}\right)\right],\\
		&X^u_t=\xi+\int_0^t b\left(s,X^u_s,u_s,m_s\right)ds+\int_0^t\sigma\left(s,X^u_s,u_s,m_s\right)dW_s\\
		&\qquad\quad\  +\int_0^t\tilde{\sigma}\left(s,X^u_s,u_s,m_s\right)dW^0_s,\quad t\in[0,T].
	\end{aligned}
	\right.
\end{equation*}
We define the generalized Hamiltonians 
\begin{equation}\label{H}
	\begin{split}
		&H^0(t,x^0,p^0,q^0,u^0,m)=b^0(t,x^0,u^0,m)p^0+\sigma^0(t,x^0,u^0,m)q^0+f^0(t,x^0,u^0,m), \\
		&H(t,x,p,q,\tilde{q},u,m,x^0)=b(t,x,u,m)p+\sigma(t,x,u,m)q+\tilde{\sigma}(t,x,u,m)\tilde{q}\\
		&\quad\qquad\qquad\qquad\qquad\qquad +f(t,x,u,m,x^0),
	\end{split}
\end{equation}
and the minimizing control functions 
\begin{equation}\label{u}
	\begin{split}
		&\hat{u}^0(t,x^0,p^0,q^0,m):=\argmin_{u^0\in \br}H^0(t,x^0,p^0,q^0,u^0,m),  \\
		&\qquad(t,x^0,p^0,q^0,m)\in [0,T]\times \mathbb{R}^3\times\pr_2(\br),\\
		&\hat{u}(t,x,p,q,\tilde{q},x^0):=\argmin_{u\in \br}H(t,x,p,q,\tilde{q},u,m,x^0),\\
		&\qquad(t,x^0,x,p,q,\tilde{q},m)\in [0,T]\times \mathbb{R}^5\times\pr_2.
	\end{split}
\end{equation}
Assumptions (H1) and (H3) ensure that the Hamiltonians $H^0$ and $H$ are strictly convex in $u^0$ and $u$, respectively, so that there is a unique minimizer. The separability condition of $f$ in Assumption (H2) ensures that $\hat{u}$ is independent of $m$. The following result is borrowed from \cite[Lemma 1]{PA}.

\begin{lemma}\label{mm:lemma:u}
	Let Assumptions (H1)-(H3) be satisfied. Then, the function $\hat{u}^0$ is measurable, locally bounded, $L(2C_{f^0})^{-1}$-Lipschitz continuous in $(x^0,p^0,q^0)$ and ${l_m}(2C_{f^0})^{-1}$-Lipschitz continuous in $m$; and the function $\hat{u}$ is measurable, locally bounded, $L(2C_f)^{-1}$-Lipschitz-continuous in $(x,p,q,\tilde{q})$ and $l_{x^0}(2C_f)^{-1}$-Lipschitz continuous in $x^0$.
\end{lemma}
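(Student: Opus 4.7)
The plan is to exploit the strong convexity of the Hamiltonians in the control variable (inherited from assumption (H3) via the affine structure of the drifts and diffusions in the control from (H1)) to characterize $\hat{u}^0$ and $\hat{u}$ as zeros of $H^0_{u^0}$ and $H_u$ respectively, and then to read off the Lipschitz constants by pitting these first-order conditions against the strong convexity inequality. The core observation is that, since $b^0, \sigma^0$ are affine in $u^0$, the Hessian of $H^0$ in $u^0$ equals that of $f^0$; combined with (H3) this gives $(H^0_{u^0}(u^0_1) - H^0_{u^0}(u^0_2))(u^0_1 - u^0_2) \geq 2C_{f^0}|u^0_1 - u^0_2|^2$ for all other arguments held fixed. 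The same holds for $H$ in $u$, and moreover, because of the separability $f = f^1(t,x,u,x^0) + f^2(t,x,m,x^0)$ from (H2) and the affinity of $b,\sigma,\tilde{\sigma}$ in $u$, the expression $H_u = b_2(t) p + \sigma_2(t) q + \tilde{\sigma}_2(t) \tilde{q} + f^1_u(t,x,u,x^0)$ does not depend on $m$, which is precisely why $\hat{u}$ comes out independent of $m$.

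First I would establish existence and uniqueness of the minimizers: by strong convexity in $u^0$ (resp.\ $u$) together with the linear growth of $f^0_{u^0}$ (resp.\ $f^1_u$) from (H2), the Hamiltonian is coercive in the control, so a unique minimizer exists and is characterized by the first-order condition $H^0_{u^0}(t,x^0,p^0,q^0,\hat{u}^0,m)=0$ (resp.\ $H_u(t,x,p,q,\tilde{q},\hat{u},x^0) = 0$). Measurability and local boundedness of $\hat{u}^0$ and $\hat{u}$ then follow: local boundedness by specializing the strong convexity inequality with the second control set to $0$ and invoking the linear growth of the derivatives, and measurability by uniqueness combined with joint continuity of the minimizer in its parameters.

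For the Lipschitz estimates I would pick two parameter tuples and start from
\[
2 C_{f^0}|\hat{u}^0_1 - \hat{u}^0_2|^2 \leq \bigl(H^0_{u^0}(t,x^0_1,p^0_1,q^0_1,\hat{u}^0_1,m_1) - H^0_{u^0}(t,x^0_1,p^0_1,q^0_1,\hat{u}^0_2,m_1)\bigr)(\hat{u}^0_1 - \hat{u}^0_2),
\]
then use $H^0_{u^0}(t,x^0_1,p^0_1,q^0_1,\hat{u}^0_1,m_1) = 0 = H^0_{u^0}(t,x^0_2,p^0_2,q^0_2,\hat{u}^0_2,m_2)$ to rewrite the right-hand side as an increment of $H^0_{u^0}$ purely in the parameter arguments with the control held at $\hat{u}^0_2$. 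Because $H^0_{u^0} = b^0_2(t) p^0 + \sigma^0_2(t) q^0 + f^0_{u^0}(t,x^0,u^0,m)$, the bounds on $b^0_2, \sigma^0_2$ from (H1) and the Lipschitz regularity of $f^0_{u^0}$ from (H2) give $L$-Lipschitz continuity in each of $(x^0,p^0,q^0)$ and $l_m$-Lipschitz continuity in $m$; dividing through by $|\hat{u}^0_1 - \hat{u}^0_2|$ then yields the advertised constants $L/(2C_{f^0})$ and $l_m/(2C_{f^0})$. The parallel computation for $\hat{u}$ uses the $L$-Lipschitz bounds of $H_u$ in $(x,p,q,\tilde{q})$ and the $l_{x^0}$-Lipschitz bound in $x^0$ (with $m$ absent) to produce $L/(2C_f)$ and $l_{x^0}/(2C_f)$.

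There is no substantive obstacle here; once the correct form of the strong convexity inequality in $u$ and the parameter-Lipschitz bounds on $H^0_{u^0}$ and $H_u$ are identified, the result is essentially a one-line computation in each case. The only thing requiring care is the bookkeeping — making sure each Lipschitz constant ($L$, $l_m$, or $l_{x^0}$) is attributed to the correct variable, and that the separability condition on $f$ is used to eliminate the $m$-dependence of $H_u$ so that $\hat{u}$ inherits no $m$-Lipschitz term.
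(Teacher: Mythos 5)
Your proposal is correct and follows essentially the same route as the paper's source: the paper gives no proof of its own but borrows the lemma from \cite[Lemma 1]{PA}, whose argument is precisely yours --- characterize $\hat{u}^0$ and $\hat{u}$ by the first-order conditions $H^0_{u^0}=0$ and $H_u=0$, use the $2C_{f^0}$- (resp.\ $2C_f$-) strong monotonicity of the gradient in the control coming from (H3) and the affinity in (H1), and then divide the parameter-Lipschitz bounds ($L$ in $(x^0,p^0,q^0)$, $l_m$ in $m$; $L$ in $(x,p,q,\tilde q)$, $l_{x^0}$ in $x^0$, with $m$ absent by the separability of $f$) through the strong convexity inequality. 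Your bookkeeping of the constants and your use of separability to kill the $m$-dependence of $H_u$ match the intended proof exactly.
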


We have the following solvability of $\mathbf{P}^m$ and $\mathbf{P}^{X^0,m}$.

\begin{lemma}\label{thm:mp}
	Let Assumptions (H1)-(H3) be satisfied. Given $m\in\pr_{\f^0}^2(0,T)$, then, $\hat{u}^0=\{\hat{u}^0(t,{X}^0_t,{p}^0_t,{q}^0_t,m_t),0\le t\le T\}$ is the unique optimal control of $\mathbf{P}^m$, where $({X}^0,{p}^0,{q}^0)\in (\sr^2_{\f^0}\times\sr^2_{\f^0}\times\lr^2_{\f^0})(0,T)$ is the solution of the following FBSDEs
	\begin{equation}\label{fbsde_equation''}
		\left\{
		\begin{aligned}
			&dX^0_t=b^0(t,X^0_t,\hat{u}^0(t,X^0_t,p^0_t,q^0_t,m_t),m_t)dt\\
			&\qquad\quad+\sigma^0(t,X^0_t,\hat{u}^0(t,X^0_t,p^0_t,q^0_t,m_t),m_t)dW^0_t,\quad t\in(0,T];\\
			&dp^0_t=- H_{x^0}^0(t,X^0_t,p^0_t,q^0_t,\hat{u}^0(t,X^0_t,p^0_t,q^0_t,m_t),m_t)dt+q_t^0dW_t^0,\quad t\in[0,T);\\
			&X^0_0=\xi^0, \quad p^0_T=g_{x^0}^0(X^0_T,m_T).
		\end{aligned}
		\right.   
	\end{equation}	
	Given $(m,X^0)\in\pr_{\f^0}^2(0,T)\times\sr_{\f^0}^2(0,T)$, then, $\hat{u}=\{\hat{u}(t,{X}_t,{p}_t,{q}_t,{\tilde{q}}_t,X^0_t),0\le t\le T\}$ is the unique optimal control of $\mathbf{P}^{X^0,m}$, where $({X},{p};{q},{\tilde{q}})\in (\sr^2_{\f})^2\times(\lr^2_{\f})^2(0,T)$ is the solution of the following FBSDEs
	\begin{equation}\label{fbsde_equation'}
		\left\{
		\begin{aligned}
			&dX_t=b(t,X_t,\hat{u}(t,X_t,p_t,q_t,\tilde{q}_t,X^0_t),m_t)dt\\
			&\qquad\quad+\sigma(t,X_t,\hat{u}(t,X_t,p_t,q_t,\tilde{q}_t,X_t^0),m_t)dW_t\\
			&\qquad\quad+\tilde{\sigma}(t,X_t,\hat{u}(t,X_t,p_t,q_t,\tilde{q}_t,X^0_t),m_t)d{W}^0_t,\quad t\in(0,T];\\
			&dp_t=- H_x(t,X_t,p_t,q_t,\tilde{q}_t,\hat{u}(t,X_t,p_t,q_t,\tilde{q}_t,X^0_t),m_t,{X}^0_t)dt\\
			&\qquad\quad+q_tdW_t+\tilde{q}_td{W}^0_t,\quad t\in[0,T);\\
			&X_0=\xi,\quad p_T=g_x(X_T,m_T,X^0_T).
		\end{aligned}
		\right.   
	\end{equation}
\end{lemma}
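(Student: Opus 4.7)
The plan is to regard $\mathbf{P}^m$ (respectively $\mathbf{P}^{X^0,m}$) as a standard convex stochastic control problem in which $m$ (respectively $(m, X^0)$) is an exogenous input, and to invoke the classical Pontryagin maximum principle in both its necessary and sufficient forms. Under (H1) the forward dynamics are affine in $(x^0, u^0)$, so the map $u^0 \mapsto X^{0, u^0}$ is affine from $\lr^2_{\f^0}(0,T)$ into $\sr^2_{\f^0}(0,T)$, and by (H2)--(H3) the cost functional $J_0(\cdot\mid m)$ is strictly convex and coercive on the Hilbert space $\lr^2_{\f^0}(0,T)$: the quadratic lower bound $C_{f^0} \ge L^{-1}$ from (H3) in $u^0$ dominates the affine dependence of $X^{0,u^0}$ on $u^0$. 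A direct-method argument in the Hilbert space then delivers existence and uniqueness of a minimizer $\hat{u}^{0,*} \in \lr^2_{\f^0}(0,T)$.

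Next I would invoke the necessary stochastic maximum principle (see, e.g., \cite{JYXY,HP}) to construct the adjoint pair $(p^0, q^0) \in \sr^2_{\f^0}(0,T) \times \lr^2_{\f^0}(0,T)$ as the unique Pardoux--Peng solution of the linear BSDE
\begin{equation*}
dp^0_t = - H^0_{x^0}(t, X^0_t, p^0_t, q^0_t, \hat{u}^{0,*}_t, m_t)\, dt + q^0_t\, dW^0_t, \qquad p^0_T = g^0_{x^0}(X^0_T, m_T),
\end{equation*}
with $X^0 := X^{0, \hat{u}^{0,*}}$; the Lipschitz and linear-growth hypotheses under (H1)--(H2) make the driver admissible and the data square integrable. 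The necessary condition yields $\hat{u}^{0,*}_t \in \argmin_{u^0 \in \br} H^0(t, X^0_t, p^0_t, q^0_t, u^0, m_t)$ for $dt\otimes d\mathbb{P}$-a.e. $(t,\omega)$, which combined with the strict convexity of $H^0$ in $u^0$ and Lemma \ref{mm:lemma:u} forces $\hat{u}^{0,*}_t = \hat{u}^0(t, X^0_t, p^0_t, q^0_t, m_t)$; substituting back shows that the triple solves (\ref{fbsde_equation''}). Uniqueness of the FBSDE solution then follows by noting that any solution generates an admissible feedback control which, via the sufficient Pontryagin identity obtained by applying It\^o's formula to $p^0_t(X^{0,u^0}_t - X^0_t)$ together with the convexities of $g^0(\cdot, m_T)$ and $H^0(\cdot,\cdot,p^0_t,q^0_t,\cdot,m_t)$, must be a minimizer of $J_0(\cdot \mid m)$ --- hence equals $\hat{u}^{0,*}$ --- and this pins down $X^0$ through the forward SDE and $(p^0, q^0)$ through Pardoux--Peng uniqueness.

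The analysis for $\mathbf{P}^{X^0,m}$ is completely parallel: $(m, X^0)$ is treated as exogenous; the splitting $f = f^1 + f^2$ in (H2) together with (H3) gives joint convexity of $f$ in $(x, u)$ with strict convexity in $u$, so the same convex-minimization and Pontryagin arguments apply with the Hamiltonian $H$ and the adjoint triple $(p, q, \tilde{q})$ accounting for the two Brownian drivers $W$ and $W^0$. The only real technical point is ensuring admissibility and progressive measurability of the feedback controls; this is handled by the Lipschitz regularity of $\hat{u}^0$ and $\hat{u}$ provided by Lemma \ref{mm:lemma:u} together with standard SDE/BSDE a priori estimates. I do not anticipate any substantive obstacle at this step: the genuine difficulty of the paper lies in the fixed-point coupling $m_t = \lr(X^{\hat{u}}_t \mid \f^0_t)$ that turns the system into a conditional McKean--Vlasov FBSDE, to be tackled in the subsequent sections via the continuation method of Hu and Peng.
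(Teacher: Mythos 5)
Your proposal is correct in substance, but it follows a genuinely different route from the paper. The paper's own proof is essentially a two-line citation: the stochastic maximum principle for $\mathbf{P}^m$ and $\mathbf{P}^{X^0,m}$ is quoted from \cite[Theorem 3.1]{HT}, while the existence and uniqueness of solutions of the FBSDEs \eqref{fbsde_equation''} and \eqref{fbsde_equation'} is obtained directly from the Peng--Wu continuation theorem \cite[Theorem 2.3]{SP} --- that is, the fully coupled FBSDEs are solved as such, using the Peng--Wu monotonicity structure (which holds here because of the affine coefficients in (H1) and the convexity in (H3)), and optimality of the feedback control then comes from the sufficient side of the maximum principle alone. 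You instead run the variational scheme: strong convexity and coercivity of $J_0(\cdot|m)$ on $\lr^2_{\f^0}(0,T)$ give existence and uniqueness of a minimizer (your coercivity claim does check out: the convexity inequality of (H3) at $(x^0_1,u^0_1)=(0,0)$, the linear growth of the derivatives in (H2), and the estimate $\|X^{0,u^0}\|_{\sr^2}\le C(1+\|u^0\|_{\lr^2})$ yield $J_0(u^0|m)\ge C_{f^0}\|u^0\|^2-C(1+\|u^0\|)$); the convex-perturbation form of the necessary maximum principle --- legitimate here since the control domain is all of $\br$ and, by (H1), the diffusions are affine in the control, so no second-order adjoint is needed --- produces a solution of the FBSDE from the optimal control; and the sufficient principle plus strict convexity of the Hamiltonian gives uniqueness of the FBSDE solution, since any solution induces the unique minimizer, which pins down $X^0$ by pathwise uniqueness of the forward SDE and then $(p^0,q^0,\tilde{q})$ by Lipschitz BSDE uniqueness. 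The trade-off: the paper's citation route is shorter and gets FBSDE well-posedness without ever invoking a necessary maximum principle, but it hinges on verifying the Peng--Wu monotonicity conditions; your route is self-contained and makes the existence of the optimal control logically prior to the FBSDE, at the price of the necessary-SMP machinery and the coercivity estimate. Both are standard for linear-convex problems, your treatment of $\mathbf{P}^{X^0,m}$ parallels correctly (the separability of $f$ in (H2) ensuring $\hat{u}$ does not depend on $m$), and both deliver exactly the statement of the lemma.
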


\begin{proof}
	The stochastic maximum principle for $\mathbf{P}^m$ and $\mathbf{P}^{X^0,m}$ is a direct consequence of \cite[Theorem 3.1]{HT}, and the existence and uniqueness of solutions of FBSDEs \eqref{fbsde_equation''} and \eqref{fbsde_equation'} is an immediate consequence of \cite[Theorem 2.3]{SP}.
\end{proof}

Now we turn to Problem~\ref{def2}. Given $m\in\pr_{\f^0}^2(0,T)$, we denote by $\hat{u}^{0}$ the optimal control of $\mathbf{P}^m$ and $\hat{X}^{0}$ the corresponding state process. We denote by $\hat{u}$ the optimal control of $\mathbf{P}^{\hat{X}^0,m}$ and $\hat{X}$ the corresponding state process. The definition of Problem~\ref{def2} states that, $(\hat{u}^0,\hat{u})$ is an optimal control of Problem~\ref{def2} if the following consistency condition is satisfied
\begin{equation*}
	m_t=\lr(\hat{X}_t|\f^0_t),\quad t\in[0,T].
\end{equation*}
Therefore, we have the following stochastic maximum principle for Problem~\ref{def2}. We first give the sufficient condition, which is a direct consequence of Lemma~\ref{thm:mp} and the definition of Problem~\ref{def2}.

\begin{theorem}\label{thm:smp}
	Let Assumptions (H1)-(H3) be satisfied. If FBSDEs \eqref{fbsde2} has a solution 
	\begin{align*}
		(X^0,p^0;q^0;X,p;q,\tilde{q})\in& (\sr^2_{\f^0})^2\times\lr^2_{\f^0}\times(\sr^2_{\f})^2\times(\lr^2_{\f})^2(0,T),
	\end{align*}
	then, $(\hat{u}^0,\hat{u}):=\{(\hat{u}^0(t,{X}^0_t,{p}^0_t,{q}^0_t,\lr(X_t|{\f}_t^0)),\hat{u}(t,{X}_t,{p}_t,{q}_t,{\tilde{q}}_t,X^0_t)),\ 0\le t\le T\}$ is a solution of Problem~\ref{def2}.
\end{theorem}

We next give a necessary condition. 

\begin{theorem}\label{thm:1}
	Let Assumptions (H1)-(H3) be satisfied. Suppose that $(\hat{u}^0,\hat{u})\in(\lr_{\f^0}^2\times\lr_{\f}^2)(0,T)$ is a solution of Problem~\ref{def2}, $(\hat{X}^0,\hat{X})\in (\sr_{\f^0}^2\times\sr_{\f}^2)(0,T)$ is the corresponding state processes, $(\hat{p}^0,\hat{q}^0)\in(\sr_{\f^0}^2\times\lr_{\f^0}^2)(0,T)$ and $(\hat{p};\hat{q},\hat{\tilde{q}})\in\sr_{\f}^2\times(\lr_{\f}^2)^2(0,T)$ are adjoint processes defined as
	\begin{equation}\label{thm1_1}
		\begin{split}
			&\left\{
			\begin{aligned}
				&d\hat{p}^0_t=- H_{x^0}^0(t,\hat{X}^0_t,\hat{p}^0_t,\hat{q}^0_t,\hat{u}^0_t,\lr(\hat{X}_t|{\f}_t^0))dt+\hat{q}_t^0dW_t^0,\quad t\in[0,T),\\
				&\hat{p}^0_T=g_{x^0}^0(\hat{X}^0_T,\lr(\hat{X}_T|{\f}_T^0)),
			\end{aligned}
			\right.\\ \\
			&\left\{
			\begin{aligned}
				&d\hat{p}_t=- H_x(t,\hat{X}_t,\hat{p}_t,\hat{q}_t,\hat{\tilde{q}}_t,\hat{u}_t,\lr(\hat{X}_t|{\f}_t^0),\hat{X}^0_t)dt+\hat{q}_tdW_t+\hat{\tilde{q}}_td{W}^0_t,\  t\in[0,T),\\
				&\hat{p}_T=g_x(\hat{X}_T,\lr(\hat{X}_T|{\f}_T^0),\hat{X}^0_T).
			\end{aligned}
			\right.
		\end{split}
	\end{equation}
	Then, we have 
	\begin{equation}\label{thm1_2}
		\begin{split}
			&\hat{u}^0_t=\hat{u}^0(t,\hat{X}^0_t,\hat{p}^0_t,\hat{q}^0_t,\lr(\hat{X}_t|{\f}_t^0)),\\
			&\hat{u}_t=\hat{u}(t,\hat{X}_t,\hat{p}_t,\hat{q}_t,\hat{\tilde{q}}_t,\hat{X}^0_t).
		\end{split}		
	\end{equation}
	That is, $(\hat{X}^0,\hat{p}^0,\hat{q}^0,\hat{X},\hat{p},\hat{q},\hat{\tilde{q}})$ is a solution of FBSDEs \eqref{fbsde2}.
\end{theorem}

\begin{proof}
	The solvability of backward equations \eqref{thm1_1} can be found in \cite{JYXY}. We denote by $\hat{m}_t:=\lr(\hat{X}_t|{\f}_t^0)$ for $t\in[0,T]$. Since that $(\hat{u}^0,\hat{u})$ is a solution of Problem~\ref{def2}, we know that $\hat{u}^0$ is an optimal control of $\mathbf{P}^{\hat{m}}$ and $\hat{u}$ is an optimal control of $\mathbf{P}^{\hat{X}^0,\hat{m}}$. We denote by $(X^0,p^0,q^0)$ the solution of FBSDEs \eqref{fbsde_equation''} corresponding to $\hat{m}$, and denote by $(X,p,q,\tilde{q})$ the solution of FBSDEs \eqref{fbsde_equation'} corresponding to $(\hat{m},\hat{X}^0)$. From the uniqueness in Lemma~\ref{thm:mp}, we know that	
	\begin{equation}\label{thm1_3}
		\begin{split}
			&\hat{u}^0_t=\hat{u}^0(t,{X}^0_t,{p}^0_t,{q}^0_t,\hat{m}_t),\quad \hat{u}_t=\hat{u}(t,{X}_t,{p}_t,{q}_t,{\tilde{q}}_t,\hat{X}^0_t).
		\end{split}
	\end{equation}
	Then, we know that $(X^0,X)$ coincide with $(\hat{X}^0,\hat{X})$ in $(\sr_{\f^0}^2\times\sr_{\f}^2)(0,T)$, and then, $(p^0;q^0;p;q,\tilde{q})$ coincide with $(\hat{p}^0;\hat{q}^0;\hat{p};\hat{q},\hat{\tilde{q}})$ in $\sr_{\f^0}^2\times\lr_{\f^0}^2\times \sr_{\f}^2\times(\lr_{\f}^2)^2(0,T)$. Therefore, we obtain \eqref{thm1_2} from \eqref{thm1_3}.
\end{proof}

\subsection{Solvability of FBSDEs \eqref{fbsde2}}\label{M1}
In this subsection, we prove the existence and uniqueness of the solutions of FBSDEs \eqref{fbsde2} by the method of continuation in coefficients. For notational convenience, we denote by $\theta^0$ a process $(X^0,u^0)\in (\sr_{\f^0}^2\times\lr_{\f^0}^2)(0,T)$, and $\theta$ a process 
\begin{align*}
	\{(X_t,u_t,\lr(X_t|{\f}^0_t)),0\le t\le T\}\in (\sr_{\f}^2\times\lr_{\f}^2\times\pr_{\f^0}^2)(0,T).
\end{align*}
We denote by $\Theta^0$ a process $(\theta^0,p^0,q^0)$ with $(p^0,q^0)\in (\sr_{\f^0}^2\times\lr_{\f^0}^2)(0,T)$, and $\Theta$ a process $(\theta,p,q,\tilde{q})$ with $(p;q,\tilde{q})\in \sr_{\f}^2\times(\lr_{\f}^2)^2(0,T)$. We then denote by $\mathbb{S}$ the space of processes $(\Theta^0,\Theta)$ and
\begin{equation*}
	\begin{split}
		\|(\Theta^0,\Theta)\|_{\mathbb{S}}^2:&=\e\left[\sup_{0\le t\le T}|(X^0_t,p^0_t,X_t,p_t)|^2+\int_0^T|(u^0_t,q^0_t,u_t,q_t,\tilde{q}_t)|^2dt\right]<+\infty.	
	\end{split}
\end{equation*}
The  strategy to prove the solvability of FBSDEs \eqref{fbsde2} is to prove that the solvability are preserved when the coefficients are slightly perturbed. We denote by $(\ii^0,\ii)$ an input for FBSDEs \eqref{fbsde2} with
\begin{align*}
	&\ii^0=(\ii^{b^0},\ii^{\sigma^0},\ii^{f^0},\ii_T^{g^0})\in (\lr_{\f^0}^2\times\lr_{\f^0}^2\times\lr_{\f^0}^2)(0,T)\times\lr_{\f^0_T}^2,\\
	&\ii=(\ii^b,\ii^{\sigma},\ii^{\tilde{\sigma}},\ii^f,\ii_T^g)\in(\lr_{\f}^2\times\lr_{\f}^2\times\lr_{\f}^2\times\lr_{\f}^2)(0,T)\times\lr_{\f_T}^2,
\end{align*}
and $\mathbb{I}$ the space of all inputs, endowed with the squared norm
\begin{equation*}
	\|(\ii^0,\ii)\|_{\iii}^2:=\e\left[|(\ii_T^{g^0},\ii_T^g)|^2+\int_0^T |(\ii_t^{b^0},\ii_t^{\sigma^0},\ii_t^{f^0},\ii_t^b,\ii_t^{\sigma},\ii_t^{\tilde{\sigma}},\ii_t^f)|^2 dt\right].
\end{equation*}
For any $(\gamma,\xi^0,\xi,\ii^0,\ii)\in[0,1]\times \lr_{\f^0_0}^2\times \lr_{\f_0}^2\times\iii$, we denote by $\mathcal{E}(\gamma,\xi^0,\xi,\ii^0,\ii)$ the FBSDEs
\begin{equation}\label{main2'}
	\left\{
	\begin{aligned}
		&dX^0_t=(\gamma b^0(t,\theta^0_t,m_t)+\ii_t^{b^0})dt+(\gamma\sigma^0(t,\theta^0_t,m_t)+\ii_t^{\sigma^0})dW^0_t,\quad t\in(0,T];\\
		&dp^0_t=-(\gamma H_{x^0}^0(t,\Theta^0_t,m_t)+\ii_t^{f^0})dt+q^0_tdW^0_t,\quad t\in[0,T);\\
		&dX_t=(\gamma b(t,\theta_t)+\ii_t^b)dt+(\gamma\sigma(t,\theta_t)+\ii_t^{\sigma})dW_t\\
		&\qquad\quad+(\gamma\tilde{\sigma}(t,\theta_t)+\ii_t^{\tilde{\sigma}})dW^0_t,\quad t\in(0,T];\\
		&dp_t=-(\gamma H_x(t,\Theta_t,X_t^0)+\ii_t^f)dt+q_tdW_t+\tilde{q}_td\tilde{W}_t,\quad t\in[0,T),\\
		&X_0^0=\xi^0,\  X_0=\xi,\  p^0_T=\gamma g_{x^0}^0(X^0_T,m_T)+\ii_T^{g_0},\  p_T=\gamma g_x(X_T,m_T,X^0_T)+\ii_T^g,\\
		&u^0_t=\hat{u}^0(t,X^0_t,p^0_t,q^0_t,m_t),\quad u_t=\hat{u}(t,X_t,p_t,q_t,\tilde{q}_t,X^0_t),\quad t\in[0,T],\\
		&m_t=\lr(X_t|{\f}^0_t),\quad  t\in[0,T].\\
	\end{aligned}
	\right.
\end{equation}
Note that $\mathcal{E}(1,\xi^0,\xi,0,0)$ is just  FBSDEs \eqref{fbsde2}. For $\gamma\in [0,1]$, we say that property $(\mathcal{S}_\gamma)$ holds true if, for any $(\xi^0,\xi,\ii^0,\ii)\in\lr_{\f^0_0}^2\times \lr_{\f_0}^2\times\iii$, equation $\mathcal{E}(\gamma,\xi^0,\xi,\ii^0,\ii)$ has a unique solution in $\s$. Our aim is to show $(\mathcal{S}_1)$ holds true. We first have the following lemma,  which is crucial in the proof of our main result Theorem~\ref{main2_thm}.

\begin{lemma}\label{main2_lem2}
	Let Assumptions (H1)-(H4) be satisfied and $\gamma\in [0,1)$ such that $(\mathcal{S}_\gamma)$ holds true. Then, there exist positive constants $(\delta,\eta_0)$ depending only on $(L,T)$, such that $(\mathcal{S}_{\gamma+\eta})$ holds true for any $\eta\in(0,\eta_0\wedge(1-\gamma)]$ when $L_mC_f^{-1}\vee l_{x^0}l_m\le\delta$.
\end{lemma}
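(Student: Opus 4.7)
The plan is to use the Hu--Peng continuation method. Fix $\gamma \in [0,1)$ for which $(\mathcal{S}_\gamma)$ holds, fix an input $(\xi^0, \xi, \ii^0, \ii) \in \lr_{\f^0_0}^2 \times \lr_{\f_0}^2 \times \iii$, and define a solution map $\Phi_\eta: \s \to \s$ as follows. For a candidate $(\tilde\Theta^0, \tilde\Theta) \in \s$, absorb the extra $\eta$--perturbation of $\mathcal{E}(\gamma + \eta, \xi^0, \xi, \ii^0, \ii)$ into the inputs: solve the FBSDE $\mathcal{E}(\gamma, \xi^0, \xi, \hat\ii^0, \hat\ii)$, where $\hat\ii^{b^0}_t := \ii^{b^0}_t + \eta\, b^0(t, \tilde\theta^0_t, \tilde m_t)$ and analogously for every other coefficient slot in \eqref{main2'}, including Hamiltonian derivatives and terminal conditions, with every occurrence of $(\theta^0, \Theta^0, \theta, \Theta)$ replaced by $(\tilde\theta^0, \tilde\Theta^0, \tilde\theta, \tilde\Theta)$. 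By $(\mathcal{S}_\gamma)$ this has a unique solution $(\Theta^0, \Theta) \in \s$, and we set $\Phi_\eta(\tilde\Theta^0, \tilde\Theta) := (\Theta^0, \Theta)$. A fixed point of $\Phi_\eta$ is precisely a solution of $\mathcal{E}(\gamma + \eta, \xi^0, \xi, \ii^0, \ii)$, so it suffices to show that $\Phi_\eta$ is a strict contraction on $\s$ for small $\eta$; uniqueness at parameter $\gamma + \eta$ then follows from the same estimate applied to the difference of two solutions with matching inputs.

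To prove contraction, take two candidates $(\tilde\Theta^{0,i}, \tilde\Theta^i)$ with images $(\Theta^{0,i}, \Theta^i)$ for $i = 1, 2$, and set $\Delta X^0 := X^{0,1} - X^{0,2}$, and similarly for $\Delta p^0, \Delta q^0, \Delta X, \Delta p, \Delta q, \Delta \tilde q, \Delta u^0, \Delta u$, with tilde-versions for the candidate differences. The differences satisfy a linear FBSDE at parameter $\gamma$ driven by $\eta$--scaled input differences. Apply It\^o's formula to $\Delta p^0_t \Delta X^0_t + \Delta p_t \Delta X_t$ on $[0, T]$, take expectations, and exploit the forward/backward cancellation in the drift. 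The terminal contribution is nonnegative by the convexity of $g^0$ in $x^0$ from (H3), the convexity of $g$ in $x$ from (H3), and the monotonicity of $g_x$ in $(x, m)$ from (H4), up to cross-$m$ residuals controlled by $l_m$ and cross-$x^0$ residuals controlled by $l_{x^0}$. In the $dt$-integrand, the joint convexity of $f^0$ in $(x^0, u^0)$ and of $f^1$ in $(x, u)$ produces the coercive terms $C_{f^0}\,\e\int_0^T|\Delta u^0|^2\, dt$ and $C_f\,\e\int_0^T|\Delta u|^2\, dt$ with the good sign, while the (H4) monotonicity of $f^2_x$ eliminates the most dangerous mean-field drift contribution.

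The remaining cross-terms split into two categories that force the smallness hypotheses. The $L_m$--Lipschitz dependence of $(b, \sigma, \tilde\sigma)$ on $m$ couples $\Delta m$ with $(\Delta p, \Delta q, \Delta \tilde q)$, and the Lipschitz bound $|\Delta u| \le L(2C_f)^{-1}(|\Delta X| + |\Delta p| + |\Delta q| + |\Delta \tilde q|) + l_{x^0}(2C_f)^{-1}|\Delta X^0|$ from Lemma~\ref{mm:lemma:u} converts control differences into state/adjoint differences; absorbing these into the $C_f|\Delta u|^2$ coercivity requires the smallness $L_m C_f^{-1} \le \delta$. The coupling between the two systems, via the dependence of $(f, g)$ on $x^0$ and of $(b^0, \sigma^0, f^0, g^0)$ on $m$, produces cross-terms of order $l_{x^0} l_m$ after Young's inequality, absorbed when $l_{x^0} l_m \le \delta$. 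Combining these with standard $L^2$ a priori estimates on the backward parts $(\Delta p^0, \Delta q^0, \Delta p, \Delta q, \Delta \tilde q)$ (whose inputs are $\eta$--small and bounded by the candidate differences), one obtains an inequality of the form
\[
\|(\Delta\Theta^0, \Delta\Theta)\|_{\s}^2 \le \bigl(C_1(L, T)\,\delta + C_2(L, T)\,\eta\bigr) \|(\Delta\tilde\Theta^0, \Delta\tilde\Theta)\|_{\s}^2.
\]
Choose $\delta = \delta(L, T)$ so that $C_1\delta \le 1/4$, then $\eta_0 = \eta_0(L, T)$ so that $C_2\eta_0 \le 1/4$; the map $\Phi_\eta$ is then a contraction on $\s$ for every $\eta \in (0, \eta_0 \wedge (1 - \gamma)]$, yielding $(\mathcal{S}_{\gamma + \eta})$.

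The hardest part will be the precise bookkeeping of the cross-covariation terms induced by the common noise $W^0$. Because $W^0$ drives both the major system and, through the $\tilde\sigma\,dW^0$ and $\tilde q\,dW^0$ terms, the minor system as well, the It\^o bracket $d\langle \Delta p, \Delta X\rangle$ produces quadratic covariation contributions that mix $\Delta \tilde q$ with $\Delta \sigma$ and $\Delta \tilde\sigma$; these interact nontrivially with the mean-field Lipschitz constants. The affine structure (H1), which makes the drift and diffusion coefficients linear in $(x, u)$, together with the separability of $f$ in (H2), are what guarantee that these cross-covariations are multiplied only by the ``small'' constants $L_m, l_m, l_{x^0}$, so that the thresholds $(\delta, \eta_0)$ can be chosen as universal functions of $(L, T)$ alone.
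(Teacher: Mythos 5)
Your proposal is correct and follows essentially the same route as the paper: you define the same perturbation map absorbing the $\eta$-increment into the inputs of $\mathcal{E}(\gamma,\cdot)$ and conclude by a contraction and fixed-point argument, and your inline It\^o/convexity/weak-monotonicity computation is precisely the content of the paper's standalone stability estimate (Lemma~\ref{main2_lem1}), which gives the contraction constant $C\eta$ under $L_mC_f^{-1}\vee l_{x^0}l_m\le\delta$. The only cosmetic difference is that the paper quotes that stability estimate as a separate lemma while you re-derive it inside the contraction step, and your bound $(C_1\delta+C_2\eta)$ is a slightly looser bookkeeping of the paper's $C\eta$, since the $\delta$-small cross-terms actually involve the output differences and are absorbed on the left-hand side rather than appearing against the candidate differences.
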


\begin{proof}
	Let $\gamma\in[0,1)$ such that $(\mathcal{S}_{\gamma})$ holds true. For $(\xi^0,\xi,\ii^0,\ii)\in \lr_{\f^0_0}^2\times\lr_{\f_0}^2\times\iii$ and $\eta>0$, we now show that the FBSDEs $\mathcal{E}(\gamma+\eta,\xi_0,\xi,\ii^0,\ii)$ has a unique solution in $\mathbb{S}$. We define a map $\Phi:\mathbb{S}\to\mathbb{S}$ as follows: for any $(\Theta^0,\Theta)\in\mathbb{S}$, let $\Phi(\Theta^0,\Theta)$  be the unique solution of  equation $\mathcal{E}(\gamma,\xi_0,\xi,\hat{\ii}^0,\hat{\ii})$, where
	\begin{align*}
		&(\hat{\ii}_t^{b^0},\hat{\ii}_t^{\sigma^0}):=\eta (b,\sigma)(t,\theta_t^0,m_t)+(\ii_t^{b^0},\ii_t^{\sigma^0}),\\
		&(\hat{\ii}_t^{b},\hat{\ii}_t^{\sigma},\hat{\ii}_t^{\tilde{\sigma}}):=\eta(b,\sigma,\tilde{\sigma})(t,\theta_t)+(\ii_t^{b},\ii_t^{\sigma},\ii_t^{\tilde{\sigma}}),\\
		&\hat{\ii}_t^{f^0}:=\eta H_{x^0}^0(t,\Theta^0_t,m_t)+\ii_t^{f^0},\quad \hat{\ii}_t^{f}:=\eta H_x(t,\Theta_t,X_t^0)+\ii_t^{f},\\
		&\hat{\ii}_T^{g^0}:=\eta g^0_{x^0}(X^0_T,m_T)+\ii_T^{g^0},\quad \hat{\ii}_T^{g}:=\eta g_x(X_T,m_T,X_t^0)+\ii_T^{g}.
	\end{align*}
	It is obvious that $(\Theta^0,\Theta)\in\mathbb{S}$ is a fixed point of $\Phi$ if and only if $(\Theta^0,\Theta)$ is a solution of $\mathcal{E}(\gamma+\eta,\xi_0,\xi,\ii^{0},\ii)$. Therefore, we only need to show that $\Phi$ is a contraction when $\eta$ is small enough. In fact, we know from Lemma~\ref{main2_lem1} below and Assumptions (H1)-(H2) that, there exist positive constants $(\delta,C)$ depending only on $(L,T)$, such that for any $(\Theta^{0}_1,\Theta_1),(\Theta^{0}_2,\Theta_2)\in\mathbb{S}$, 
	\begin{equation*}
		\|\Phi(\Theta^{0}_2,\Theta_2)-\Phi(\Theta^{0}_1,\Theta_1)\|_{\mathbb{S}}\le C\|(\hat{\ii}^{0}_2-\hat{\ii}^{0}_1,\hat{\ii}_{2}-\hat{\ii}_{1})\|_{\iii}\le C\eta\|(\Theta^{0}_2-\Theta^{0}_1,\Theta_{2}-\Theta_{1})\|_{\mathbb{S}},
	\end{equation*}
	when $L_mC_f^{-1}\vee l_{x^0}l_m\le\delta$. So when $\eta$ is small enough, $\Phi$ is a contraction.
\end{proof}

Our main result of this section is stated as follows.

\begin{theorem}\label{main2_thm}
	Let Assumptions (H1)-(H4) be satisfied. Then, there is a constant $\delta>0$ depending only on $(L,T)$, such that FBSDEs \eqref{fbsde2} is uniquely solvable when $L_mC_f^{-1}\vee l_{x^0}l_m\le\delta$.
\end{theorem}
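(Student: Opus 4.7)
The plan is to apply the method of continuation in coefficients, using Lemma~\ref{main2_lem2} as the inductive step. Since the lemma gives a step size $\eta_0$ and a smallness threshold $\delta$ that depend only on $(L,T)$ and not on $\gamma$, finitely many applications of the lemma will move us from $\gamma=0$ to $\gamma=1$, provided only that the base case $(\mathcal{S}_0)$ is verified.

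First I would check $(\mathcal{S}_0)$. When $\gamma=0$, the FBSDE $\mathcal{E}(0,\xi^0,\xi,\ii^0,\ii)$ in \eqref{main2'} fully decouples: the forward equations for $X^0$ and $X$ reduce to
\begin{equation*}
    dX^0_t=\ii_t^{b^0}dt+\ii_t^{\sigma^0}dW^0_t,\qquad dX_t=\ii_t^b dt+\ii_t^\sigma dW_t+\ii_t^{\tilde\sigma}dW^0_t,
\end{equation*}
which are standard SDEs driven by the inputs, with unique solutions in $\sr_{\f^0}^2(0,T)$ and $\sr_{\f}^2(0,T)$ respectively. Given these solutions, the backward equations for $(p^0,q^0)$ and $(p,q,\tilde q)$ reduce to linear BSDEs with terminal data $\ii_T^{g^0}$ and $\ii_T^g$ and drivers $\ii_t^{f^0}$, $\ii_t^f$; these are uniquely solvable in the relevant $\sr\times\lr$ spaces by classical BSDE theory. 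Therefore $(\mathcal{S}_0)$ holds.

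Next I would iterate Lemma~\ref{main2_lem2}. Fix the constant $\delta$ from that lemma (depending only on $(L,T)$) and assume $L_m C_f^{-1}\vee l_{x^0}l_m\le\delta$. Let $\eta_0>0$ be the step-size from the same lemma. Choose $k\in\mathbb{N}$ with $k\eta_0\ge 1$. Starting from $(\mathcal{S}_0)$, Lemma~\ref{main2_lem2} yields $(\mathcal{S}_{\eta_0})$; applying it again gives $(\mathcal{S}_{2\eta_0})$; after at most $k$ steps we reach $(\mathcal{S}_1)$. The crucial point ensuring the induction works is that the step size $\eta_0$ is uniform in $\gamma$, so a fixed finite number of iterations suffices.

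Finally, taking $(\gamma,\xi^0,\xi,\ii^0,\ii)=(1,\xi^0,\xi,0,0)$ in $(\mathcal{S}_1)$ yields the unique solvability of $\mathcal{E}(1,\xi^0,\xi,0,0)$ in $\mathbb{S}$, which is precisely FBSDE \eqref{fbsde2}. The essential content of the argument, namely the a priori stability estimate used in the proof of Lemma~\ref{main2_lem2}, has already been invoked; the remaining step here is purely the continuation bookkeeping, so no further obstacle arises beyond verifying the trivial base case $(\mathcal{S}_0)$ described above.
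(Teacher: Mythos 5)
Your proposal is correct and follows essentially the same route as the paper: the paper's proof simply invokes Lemma~\ref{main2_lem2} together with the (trivially verified) base case $(\mathcal{S}_0)$, exactly as you do. Your write-up merely makes explicit what the paper leaves implicit, namely that at $\gamma=0$ the system \eqref{main2'} decouples into input-driven SDEs and linear BSDEs, and that the uniformity of $(\delta,\eta_0)$ in $\gamma$ permits reaching $\gamma=1$ in finitely many continuation steps.
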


\begin{proof}
	It is a direct consequence of Lemma~\ref{main2_lem2} and the fact that $(\mathcal{S}_0)$ holds true.
\end{proof}

Theorem~\ref{thm_main_1} is now a direct consequence of Theorems~\ref{thm:smp} and \ref{main2_thm}. To complete the previous proof, it remains to state and prove the following lemma.

\begin{lemma}\label{main2_lem1}
	Let Assumptions (H1)-(H4) be satisfied and $\gamma\in [0,1]$ such that $(\mathcal{S}_\gamma)$ holds true. Then, there exist positive constants $(\delta,C)$ depending only on $(L,T)$, such that for any $(\xi_1^0,\xi_1,\ii^{0}_1,\ii_1),(\xi^0_2,\xi_2,\ii^0_2,\ii_2)\in \lr_{\f^0_0}^2\times\lr_{\f_0}^2\times\iii$, the solutions $(\Theta^{0}_1,\Theta_1)$ and $(\Theta^{0}_2,\Theta_2)$ of $\mathcal{E}(\gamma,\xi^{0}_1,\xi_1,\ii^{0}_1,\ii_1)$ and $\mathcal{E}(\gamma,\xi^{0}_2,\xi_2,\ii^{0}_2,\ii_2)$ satisfy
	\begin{equation*}
		\|(\Theta^{0}_1-\Theta^{0}_2,\Theta_{1}-\Theta_{2})\|_{\mathbb{S}}^2\le C\big(\e[|\xi_{1}^0-\xi^{0}_2|^2+|\xi_1-\xi_2|^2]+\|(\ii^{0}_1-\ii^{0}_2,\ii_1-\ii_2)\|_{\iii}^2\big)
	\end{equation*}
	when $L_mC_f^{-1}\vee l_{x^0}l_m\le\delta$.
\end{lemma}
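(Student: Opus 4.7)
The plan is to combine an energy inequality obtained from Itô's product formula with standard linear SDE/BSDE a priori estimates, and then close the loop by choosing $\delta$ small so that the cross couplings between the major and minor sub-systems can be absorbed into the left-hand side. Throughout, write $\Delta\cdot:=\cdot_2-\cdot_1$ for every quantity. Because (H1) forces $(b^0,\sigma^0,b,\sigma,\tilde\sigma)$ to be affine in $(x,u)$, the $\Delta$-processes satisfy linear conditional FBSDEs whose inhomogeneities are the differences of initial data, of inputs, and of the $m$-Lipschitz pieces $\phi_0(\cdot,m_2)-\phi_0(\cdot,m_1)$. The elementary bound $W_2(m_{2,t},m_{1,t})^2\le\e[|\Delta X_t|^2\mid\f^0_t]$ coming from $m_{i,t}=\lr(X_{i,t}\mid\f^0_t)$ reduces every $W_2$-cross term to an $\sr^2_{\f}$-norm of $\Delta X$.

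First, apply Itô's formula to $\Delta p^0_t\Delta X^0_t$ and take expectations. The affine form of $b^0,\sigma^0$ and the first-order condition $H^0_{u^0}(t,\Theta^0_i,m_{i,t})=0$ collapse the cross product $b^0_2\Delta p^0\Delta u^0+\sigma^0_2\Delta q^0\Delta u^0$ into $-\Delta f^0_{u^0}\Delta u^0$. Joint (H3)-convexity of $f^0$ then yields $\Delta f^0_{x^0}\Delta X^0+\Delta f^0_{u^0}\Delta u^0\ge 2C_{f^0}|\Delta u^0|^2$ at fixed $m$, and (H3)-convexity of $g^0$ yields terminal nonnegativity, both modulo $l_m W_2(\Delta m)$-remainders; the drift inhomogeneity $\Delta p^0\Delta b^0_0+\Delta q^0\Delta\sigma^0_0$ is bounded similarly by $l_m W_2(\Delta m)(|\Delta p^0|+|\Delta q^0|)$. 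Repeating the same Itô step on $\Delta p_t\Delta X_t$ for the minor system, the first-order trick for $\hat u$, the (H3)-convexity of $f^1$ in $(x,u)$ and of $g$ in $x$, and the (H4) weak monotonicity of $f^2,g$ in $m$ applied with the natural coupling given by the joint conditional law of $(X_1,X_2)$ under $\f^0_t$, jointly produce
\begin{equation*}
\gamma C_{f^0}\,\e\!\int_0^T|\Delta u^0_t|^2\,dt + \gamma C_f\,\e\!\int_0^T|\Delta u_t|^2\,dt \;\le\; R,
\end{equation*}
where $R$ collects the data $\Delta\xi^0,\Delta\xi,\Delta\ii^0,\Delta\ii$, an $l_m$-weighted cross term in $W_2(\Delta m)$ against $(\Delta X^0,\Delta p^0,\Delta q^0,\Delta u^0)$, an $L_m$-weighted analogue in the minor variables, and an $l_{x^0}$-weighted term in $|\Delta X^0|$ against $(\Delta X,\Delta u,\Delta p,\Delta q,\Delta\tilde q)$ arising from the $x^0$-Lipschitz perturbation of $f_x,g_x$.

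Second, the standard $\sr^2$ estimate for linear SDEs controls $\e\!\sup|\Delta X^0|^2$ in terms of $\e|\Delta\xi^0|^2$, $\e\!\int|\Delta u^0|^2\,dt$, $l_m^2\e\!\int W_2(\Delta m_t)^2\,dt$ and $\|\Delta\ii^0\|_\iii^2$; analogously $\e\!\sup|\Delta X|^2$ is controlled by $\e|\Delta\xi|^2$, $\e\!\int|\Delta u|^2\,dt$, $L_m^2\e\!\int W_2(\Delta m_t)^2\,dt$ and $\|\Delta\ii\|_\iii^2$. The corresponding linear BSDE estimates bound $\e\!\sup|\Delta p^0|^2+\e\!\int|\Delta q^0|^2\,dt$ in terms of $\e\!\sup|\Delta X^0|^2$, $\e\!\int|\Delta u^0|^2\,dt$, $l_m^2\e\!\int W_2(\Delta m_t)^2\,dt$ and inputs, and $\e\!\sup|\Delta p|^2+\e\!\int(|\Delta q|^2+|\Delta\tilde q|^2)\,dt$ in terms of $\e\!\sup|\Delta X|^2$, $\e\!\int|\Delta u|^2\,dt$, $L_m^2\e\!\int W_2(\Delta m_t)^2\,dt$, $l_{x^0}^2\e\!\sup|\Delta X^0|^2$ and inputs. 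Combined with $\e\!\int_0^T W_2(\Delta m_t)^2\,dt\le T\,\e\!\sup|\Delta X|^2$, every side quantity in $R$ is expressible in terms of $\e\!\int|\Delta u^0|^2\,dt$, $\e\!\int|\Delta u|^2\,dt$, the data, and factors $l_m^2,L_m^2,l_{x^0}^2$.

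Finally, apply Young's inequality to every cross product in $R$. A careful bookkeeping shows that, after using the universal bounds $l_m,L_m,l_{x^0}\le L$ and $C_f,C_{f^0}\ge L^{-1}$, the only coefficients that are not automatically small are $L_m^2 C_f^{-1}$ (from the $L_m W_2(\Delta m)|\Delta u|$ term absorbed by $C_f|\Delta u|^2$) and $l_{x^0}^2 l_m^2$ (from the chain $\Delta m\to\Delta X^0$ with constant $l_m$ followed by $\Delta X^0\to$ minor adjoint with constant $l_{x^0}$). Choosing $\delta=\delta(L,T)>0$ small so that $L_mC_f^{-1}\vee l_{x^0}l_m\le\delta$ forces both critical coefficients to be small; the corresponding cross terms are absorbed into the left-hand side, while the remaining constants depend only on $(L,T)$, yielding the claimed stability estimate. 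The main obstacle is the circular dependence $m\to X^0\to p\to X\to m$ between the two sub-systems: one must order the Young absorptions so that the $l_m$-perturbation on the major side is completed before tackling the $l_{x^0}$-perturbation on the minor side, thereby isolating $L_m C_f^{-1}$ and $l_{x^0}l_m$ as the only two scales that genuinely need to be small.
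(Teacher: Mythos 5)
Your plan reproduces the paper's proof: the same It\^o computations on $\Delta p^0_t\Delta X^0_t$ and $\Delta p_t\Delta X_t$, the first-order conditions plus (H3) convexity to extract $C_{f^0}|\Delta u^0_t|^2$ and $C_f|\Delta u_t|^2$, (H4) applied via the conditional coupling, the bound $\e[W_2(m^1_t,m^2_t)^2]\le\e[|\Delta X_t|^2]$, standard SDE/BSDE estimates, absorption of the $L_m$-weighted term under $L_mC_f^{-1}\le\delta$, and closing the circular coupling through $\|\Delta\Theta^0\|^2\lesssim l_m^2\,\e[\sup_t|\Delta X_t|^2]$ and $\|\Delta\Theta\|^2\lesssim l_{x^0}^2\,\e[\sup_t|\Delta X^0_t|^2]$ under $l_{x^0}l_m\le\delta$. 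The only differences are presentational: you merge the two energy inequalities before absorbing and phrase the critical scales as $L_m^2C_f^{-1}$ and $l_{x^0}^2l_m^2$, which are equivalent to the paper's $L_mC_f^{-1}$ and $l_{x^0}l_m$ since $C_f\ge L^{-1}$ and $L_m,l_m,l_{x^0}\le L$.
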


\begin{proof}
	We set $(\Delta \Theta^0,\Delta \Theta)=(\Theta^{0}_2-\Theta^{0}_1,\Theta_{2}-\Theta_{1})$ and $ (\Delta\ii^0,\Delta\ii)=(\ii^0_2-\ii^0_1,\ii_2-\ii_1)$. Then $(\Delta \Theta^0,\Delta \Theta)$ satisfy the following FBSDEs
	\begin{equation*}
		\left\{
		\begin{aligned}
			&d\Delta X^0_t=[\gamma((b^0_0(t,m_t^2)-b^0_0(t,m_t^1))+b^0_1(t)\Delta X^0_t+b^0_2(t)\Delta u^0_t)+\Delta\ii_t^{b^0}]dt\\
			&\qquad\qquad+[\gamma((\sigma^0_0(t,m_t^2)-\sigma^0_0(t,m_t^1))+\sigma^0_1(t)\Delta X^0_t+\sigma^0_2(t)\Delta u^0_t)+\Delta\ii_t^{\sigma^0}]dW^0_t,\  t\in(0,T];\\
			&d\Delta p^0_t=-[\gamma(b^0_1(t)\Delta p^0_t+\sigma^0_1(t)\Delta q^0_t+f^0_{x^0}(t,X_t^{0,2},u_t^{0,2},m_t^2)\\
			&\qquad\qquad-f^0_{x^0}(t,X_t^{0,1},u_t^{0,1},m_t^1))+\Delta\ii_t^{f^0}]dt+\Delta q^0_tdW^0_t,\quad  t\in[0,T);\\
			&d\Delta X_t=[\gamma((b_0(t,m_t^2)-b_0(t,m_t^1))+b_1(t)\Delta X_t+b_2(t)\Delta u_t)+\Delta\ii_t^b]dt\\
			&\qquad\qquad+[\gamma((\sigma_0(t,m_t^2)-\sigma_0(t,m_t^1))+\sigma_1(t)\Delta X_t+\sigma_2(t)\Delta u_t)+\Delta\ii_t^{\sigma}]dW_t\\
			&\qquad\qquad+[\gamma((\tilde{\sigma}_0(t,m_t^2)-\tilde{\sigma}_0(t,m_t^1))+\tilde{\sigma}_1(t)\Delta X_t+\tilde{\sigma}_2(t)\Delta u_t)+\Delta\ii_t^{\tilde{\sigma}}]d{W}^0_t,\  t\in(0,T];\\
			&d\Delta p_t=-[\gamma(b_1(t)\Delta p_t+\sigma_1(t)\Delta q_t+\tilde{\sigma}_1(t)\Delta\tilde{q}_t+f_x(t,X_t^2,u_t^2,m_t^2,X_t^{0,2})\\
			&\qquad\qquad-f_x(t,X_t^1,u_t^1,m_t^1,X_t^{0,1}))+\Delta\ii_t^f]dt+\Delta q_tdW_t+\Delta\tilde{q}_td{W}^0_t,\quad t\in[0,T);\\
			&\Delta X^0_0=\Delta\xi^0,\quad \Delta p^0_T=\gamma(g^0_{x^0}(X^{0,2}_T,m_T^2)-g^0_{x^0}(X_T^{0,1},m_T^1))+\Delta\ii_T^{g^0},\\
			&\Delta X_0=\Delta\xi,\quad \Delta p_T=\gamma(g_x(X_T^2,m_T^2,X_T^{0,2})-g_x(X_T^1,m_T^1,X_T^{0,1}))+\Delta\ii_T^g
		\end{aligned}
		\right.
	\end{equation*}
	with $m_t^i=\lr(X_t^i|\f^0_t)$ for $i=1,2$, and the condition
	\begin{equation}\label{main1_lem1_0.2}
		\begin{split}
			& b_2^0(t)\Delta p_t^0+\Delta\sigma^0_2(t)q_t^0+f^0_{u}(t,X_t^{0,2},u_t^{0,2},m_t^2)-f^0_{u}(t,X_t^{0,1},u_t^{0,1},m_t^1)=0,\\
			& b_2(t)\Delta p_t+\sigma_2(t)\Delta q_t+\Delta\tilde{\sigma}_2(t)\tilde{q}_t+f^1_{u}(t,X^2_t,u^2_t,X^{0,2}_t)-f^1_{u}(t,X^1_t,u^1_t,X^{0,1}_t)=0.
		\end{split}
	\end{equation}
	From Assumptions (H1)-(H2) and standard estimates for SDEs and BSDEs, there exists $C_1>0$ depending only on $(L,T)$, such that
	\begin{equation}\label{main2_lem1_1}
		\begin{split}
			&\e\left[\sup_{0\le t\le T}|(\Delta X^0_t,\Delta p^0_t)|^2+\int_0^T |\Delta q^0_t|^2 dt\right]\\
			\le\ & C_1\left(\e\left[|\Delta\xi^0|^2+\gamma l_m^2\sup_{0\le t\le T}|\Delta X_t|^2+\gamma\int_0^T  |\Delta u^0_t|^2 dt\right]+\|\Delta\ii^0\|^2\right).
		\end{split}
	\end{equation}
	Here, we have used the inequality
	\begin{equation*}\label{main1_lem1_0.1}
		\e\left[W_2(m_t^1,m_t^2)^2\right]\le \e\left[\e[|\Delta X_t|^2|\f^0_t]\right]=\e[|\Delta X_t|^2], \quad t\in[0,T].
	\end{equation*}
	Applying It\^o's lemma on $\Delta p^0_t\Delta X^0_t$ and taking expectation, we have
	\begin{equation}\label{main2_lem1_4}
		\begin{split}
			&\e[\Delta p^0_T\Delta X^0_T]-\e[\Delta p^0_0\Delta\xi^0]\\
			=\ &\e\bigg[\int_0^T \gamma\Big[(b^0_0(t,m_t^2)-b^0_0(t,m_t^1))\Delta p^0_t+(\sigma^0_0(t,m_t^2)-\sigma^0_0(t,m_t^1))\Delta q^0_t\\
			&\qquad\qquad +(b^0_2(t)\Delta p^0_t+\sigma^0_2(t)\Delta q^0_t)\Delta u^0_t\\
			&\qquad\qquad -(f^0_{x^0}(t,X_t^{0,2},u_t^{0,2},m_t^2)-f^0_{x^0}(t,X_t^{0,1},u_t^{0,1},m_t^1))\Delta X^0_t\Big]\\
			&\qquad\quad +\left(\Delta p^0_t\Delta\ii_t^{b^0}+\Delta q^0_t\Delta\ii_t^{\sigma^0}-\Delta X^0_t\Delta\ii_t^{f^0}\right) dt\bigg].
		\end{split}
	\end{equation}
	From Assumption (H2), we have
	\begin{equation}\label{main1_lem1_8}
		\begin{split}
			&\e\left[\int_0^T\gamma\left[(b^0_0(t,m_t^2)-b^0_0(t,m_t^1))\Delta p^0_t+(\sigma^0_0(t,m_t^2)-\sigma^0_0(t,m_t^1))\Delta q^0_t\right]dt\right]\\
			\le\ & \e\big[\int_0^T\gamma l_m W_2(m_t^1,m_t^2)(|\Delta p_t^0|+|\Delta q_t^0|)dt\big].
		\end{split}
	\end{equation}
	From \eqref{main1_lem1_0.2} and Assumptions (H2)-(H3), we have
	\begin{equation}\label{main2_lem1_5}
		\begin{split}
			&(b^0_2(t)\Delta p^0_t+\sigma^0_2(t)\Delta q^0_t)\Delta u^0_t-(f^0_{x^0}(t,X_t^{0,2},u_t^{0,2},m_t^2)-f^0_{x^0}(t,X_t^{0,1},u_t^{0,1},m_t^1))\Delta X^0_t\\
			&=-\left[(f^0_{u^0},f^0_{x^0})(t,X_t^{02},u_t^{02},m_t^2)-(f^0_{u^0},f^0_{x^0})(t,X_t^{01},u_t^{01},m_t^2)\right]\cdot(\Delta u^0_t,\Delta X^0_t)\\
			&\quad\  +\left[(f^0_{u^0},f^0_{x^0})(t,X_t^{01},u_t^{01},m_t^2)-(f^0_{u^0},f^0_{x^0})(t,X_t^{01},u_t^{01},m_t^1)\right]\cdot(\Delta u^0_t,\Delta X^0_t)\\
			&\le -2C_{f^0}|\Delta u^0_t|^2+l_m W_2(m_t^1,m_t^2)(|\Delta u_t^0|+|\Delta X_t^0|).
		\end{split}
	\end{equation}
	From Assumptions (H2)-(H3), we have
	\begin{equation}\label{main2_lem1_6}
		\begin{split}
			\e\left[\Delta p_T^0\Delta X_T^0\right]=\ &\gamma\e\left[(g^0_{x^0}(X_T^{0,2},m_T^2)-g^0_{x^0}(X_T^{0,1},m_T^2))\Delta X_T^0\right]\\
			&+\e\left[\gamma(g^0_{x^0}(X_T^{0,1},m_T^2)-g^0_{x^0}(X_T^{0,1},m_T^1))\Delta X_T^0+\Delta\ii_T^{g^0}\Delta X_T^0\right]\\
			\geq\ & -\e\left[\gamma l_mW_2(m_T^1,m_T^2)|\Delta X_T^0|+|\Delta\ii_T^{g^0}||\Delta X_T^0|\right].
		\end{split}
	\end{equation}
	Plugging \eqref{main1_lem1_8}-\eqref{main2_lem1_6} into \eqref{main2_lem1_4}, we have for any $\epsilon\in(0,1)$,
	\begin{align*}
		&2\gamma C_{f^0}\e\big[\int_0^T |\Delta u_t^0|^2 dt\big]\\
		\le\ & \e\bigg[\gamma l_mW_2(m_T^1,m_T^2)|\Delta X_T^0|+|\Delta\ii_T^{g^0}||\Delta X_T^0|+|\Delta p_0^0||\Delta\xi_0|\\
		&\quad +\gamma\int_0^T l_m W_2(m_t^1,m_t^2)\left(|\Delta X_t^0|+|\Delta p_t^0|+|\Delta q_t^0|+|\Delta u_t^0|\right) dt\\
		&\quad +\int_0^T \left(\Delta p^0_t\Delta\ii_t^{b^0}+\Delta q^0_t\Delta\ii_t^{\sigma^0}-\Delta X^0_t\Delta\ii_t^{f^0}\right) dt\bigg]\\
		\le\ & \epsilon\e\left[(T+1)\sup_{0\le t\le T}|(\Delta X_t^0,\Delta p_t^0)|^2+\int_0^T \left(|\Delta q^0_t|^2+\gamma |\Delta u_t^0|^2 \right) dt\right]\\
		& +\frac{1}{\epsilon}\e\left[\gamma l_m^2(T+1)\sup_{0\le t\le T}|\Delta X_t|^2+|\Delta\xi^0|^2\right]+\frac{1}{\epsilon}\|\Delta\ii^0\|^2.
	\end{align*}
	Plugging \eqref{main2_lem1_1} into above, we have
	\begin{align*}
		&2\gamma C_{f^0}\e\left[\int_0^T |\Delta u_t^0|^2 dt\right]\\
		\le\ & \epsilon\gamma(T+1)(C_1+1)\e\left[\int_0^T |\Delta u^0_t|^2dt\right]+\gamma l_m^2(T+1)(\frac{1}{\epsilon}+C_1)\e\left[\sup_{0\le t\le T}|\Delta X_t|^2\right]\\
		& +\left(\frac{1}{\epsilon}+(T+1)C_1\right)\left(\|\Delta\ii^0\|^2+\e[|\Delta\xi^0|^2]\right).
	\end{align*}
	Choosing $\epsilon=C_{f^0}(T+1)^{-1}(C_1+1)^{-1}$,  we have
	\begin{equation}\label{main2_lem1_8}
		\begin{split}
			&\gamma \e\left[\int_0^T |\Delta u_t^0|^2 dt\right]\\
			\le\ & \gamma l_m^2L^2(T+1)^2(2C_1+1)\e\left[\sup_{0\le t\le T}|\Delta X_t|^2\right]\\
			& +L^2(T+1)(2C_1+1)\left(\|\Delta\ii^0\|^2+\e[|\Delta\xi^0|^2]\right).
		\end{split}
	\end{equation}
	Plugging \eqref{main2_lem1_8} into \eqref{main2_lem1_1}, we have
	\begin{align*}
		&\e\left[\sup_{0\le t\le T}|(\Delta X^0_t,\Delta p^0_t)|^2+\int_0^T |\Delta q^0_t|^2 dt\right]\\
		\le \ & 2\gamma l_m^2L^2(T+1)^2C_1(C_1+1)\e\left[\sup_{0\le t\le T}|\Delta X_t|^2\right] +C(L,T)\left(\|\Delta\ii^0\|^2+\e[|\Delta\xi^0|^2]\right),
	\end{align*}
	where $C(L,T)$ is a constant depending only on $(L,T)$. We know from Lemma~\ref{mm:lemma:u} that
	\begin{equation*}
		|\Delta u^0_t|\le \frac{L^2}{2}\left(|\Delta X^0_t|+|\Delta p^0_t|+|\Delta q^0_t|\right)+\frac{l_mL}{2} W_2(m_t^1,m_t^2),
	\end{equation*}
	so we eventually have
	\begin{equation}\label{main2_lem1_a}
		\begin{split}
			\|\Delta\Theta^0\|^2\le &6 l_m^2L^6(T+1)^3(C_1+1)^2\e\left[\sup_{0\le t\le T}|\Delta X_t|^2\right]\\
			&  +C(L,T)\left(\|\Delta\ii^0\|^2+\e[|\Delta\xi^0|^2]\right).
		\end{split}
	\end{equation}
	
	Now we give the estimates of $\Delta\Theta$. From Assumptions (H1)-(H2) and standard estimates for SDEs and BSDEs, there exists $C_2>0$ depending only on $(L,T)$, such that
	\begin{equation}\label{main2_lem1_10.5}
		\begin{split}
			&\e\left[\sup_{0\le t\le T}|(\Delta X_t,\Delta p_t)|^2+\int_0^T |(\Delta q_t,\Delta \tilde{q}_t)|^2 dt\right]\\
			\le\ & C_2\left(\e\left[|\Delta\xi|^2+\gamma l_{x^0}^2 \sup_{0\le t\le T}|\Delta X^0_t|^2+\gamma \int_0^T|\Delta u_t|^2 \right]+\|\Delta\ii\|^2\right).
		\end{split}
	\end{equation}
	Applying It\^o's lemma on $\Delta p_t\Delta X_t$ and taking expectation, we have
	\begin{equation}\label{main2_lem1_11}
		\begin{split}
			&\e[\Delta p_T\Delta X_T]-\e[\Delta p_0\Delta\xi]\\
			=\ &\e\bigg[\int_0^T \gamma \Big[(b_0(t,m_t^2)-b_0(t,m_t^1))\Delta p_t+(\sigma_0(t,m_t^2)-\sigma_0(t,m_t^1))\Delta q_t\\
			&\qquad\quad+(\tilde{\sigma}_0(t,m_t^2)-\tilde{\sigma}_0(t,m_t^1))\Delta \tilde{q}_t+(b_2(t)\Delta p_t+\sigma_2(t)\Delta q_t+\tilde{\sigma}_2(t)\Delta\tilde{q}_t)\Delta u_t\\
			&\qquad\quad -(f_x(t,X_t^2,u_t^2,m_t^2,X_t^{02})-f_x(t,X_t^1,u_t^1,m_t^1,X_t^{01}))\Delta X_t\Big]\\
			&\qquad\quad+(\Delta p_t\Delta\ii_t^b+\Delta q_t\Delta\ii_t^{\sigma}+\Delta\tilde{q}_t\Delta\ii_t^{\tilde{\sigma}}-\Delta X_t\Delta\ii_t^f) dt\bigg].
		\end{split}
	\end{equation}
	From Assumption (H2), we have
	\begin{equation}\label{main1_lem1_8'}
		\begin{split}
			&\e\bigg[\int_0^T\gamma\Big[(b_0(t,m_t^2)-b_0(t,m_t^1))\Delta p_t+(\sigma_0(t,m_t^2)-\sigma_0(t,m_t^1))\Delta q_t\\
			&\qquad\qquad+(\tilde{\sigma}_0(t,m_t^2)-\tilde{\sigma}_0(t,m_t^1))\Delta \tilde{q}_t\Big]dt\bigg]\\
			\le\ & \e\left[\int_0^T\gamma L_m W_2(m_t^1,m_t^2)\left(|\Delta p_t|+|\Delta q_t|+|\Delta \tilde{q}_t|\right)dt\right]\\
			\le\ & \gamma L_m(T+1)\e\left[\sup_{0\le t\le T}|(\Delta X_t,\Delta p_t)|^2+\int_0^T|(\Delta q_t,\Delta\tilde{q}_t)|^2dt\right].
		\end{split}
	\end{equation}
	From \eqref{main1_lem1_0.2} and Assumptions (H2)-(H3), we have
	\begin{equation}\label{main2_lem1_12}
		\begin{split}
			&(b_2(t)\Delta p_t+\sigma_2(t)\Delta q_t+\tilde{\sigma}(t)\Delta \tilde{q}_t)\Delta u_t\\
			&-(f^1_{x}(t,X_t^2,u_t^2,X_t^{0,2})-f^1_{x}(t,X_t^1,u_t^1,X_t^{0,1}))\Delta X_t \\
			\le\ & -2C_f|\Delta u_t|^2+l_{x^0}|\Delta X_t^0|(|\Delta u_t|+|\Delta X_t|).
		\end{split}
	\end{equation}
	From Assumptions (H2) and (H4), we have
	\begin{equation}\label{main1_lem1_5}
		\begin{split}
			&\e\left[(f_{x}^2(t,X_t^2,m_t^2,X_t^{0,2})-f_{x}^2(t,X_t^1,m_t^1,X_t^{0,1}))\Delta X_t\right]\\
			=\ &\e\left[\e[(f^2_{x}(t,X_t^2,m_t^2,X_t^{0,2})-f^2_{x}(t,X_t^1,m_t^1,X_t^{0,2}))\Delta X_t|{\f}^0_t]\right]\\
			&+\e\left[(f^2_{x}(t,X_t^1,m_t^1,X_t^{0,2})-f^2_{x}(t,X_t^1,m_t^1,X_t^{0,1}))\Delta X_t\right]\\\
			\geq\ & -l_{x^0}\e\left[|\Delta X_t^0||\Delta X_t|\right]\\
			&\e[\Delta p_T\Delta X_T]\\
			=\ & \e\left[\e\left[\gamma\left(g_x(X_T^2,m_T^2,X_T^{0,2})-g_x(X_T^1,m_T^1,X_t^{0,2})\right)\Delta X_T\Big|\f^0_T\right]\right]\\
			&+\e\left[\gamma(g_x(X_T^1,m_T^1,X_T^{0,2})-g_x(X_T^1,m_T^1,X_t^{0,1}))\Delta X_T+\Delta X_T\Delta\ii_T^g\right]\\		
			\geq\ & -\gamma l_{x^0} \e \left[|\Delta X_T^0||\Delta X_T|\right]+\e\left[\Delta X_T\Delta\ii_T^g\right].
		\end{split}
	\end{equation}
	Plugging \eqref{main1_lem1_8'}-\eqref{main1_lem1_5} into \eqref{main2_lem1_11}, we have for any $\epsilon\in(0,1)$,
	\begin{align*}
		&2\gamma C_f\e\left[\int_0^T|\Delta u_t|^2dt\right]\\
		\le\ & \gamma L_m(T+1)\e\left[\sup_{0\le t\le T}|(\Delta X_t,\Delta p_t)|^2+\int_0^T|(\Delta q_t,\Delta\tilde{q}_t)|^2dt\right]\\
		&+\e\bigg[\gamma l_{x^0} |\Delta X_T^0||\Delta X_T|+|\Delta X_T||\Delta\ii_T^g|+|\Delta p_0||\Delta\xi|\\
		&\qquad +\gamma l_{x^0}\int_0^T |\Delta X^0_t|(|\Delta u_t|+2|\Delta X_t|) dt\\
		&\qquad +\int_0^T\left( |\Delta p_t||\Delta\ii_t^b|+|\Delta q_t||\Delta\ii_t^{\sigma}|+|\Delta\tilde{q}_t||\Delta\ii_t^{\tilde{\sigma}}|+|\Delta X_t||\Delta\ii_t^f| \right) dt\bigg]\\
		\le\ & (L_m+\epsilon)(T+1)\e\left[\sup_{0\le t\le T}|(\Delta X_t,\Delta p_t)|^2+\int_0^T|(\Delta q_t,\Delta\tilde{q}_t)|^2dt\right]\\
		& +\epsilon\gamma\e\left[\int_0^T|\Delta u_t|^2dt\right]+\frac{1}{\epsilon}\e\left[3l^2_{x^0}(T+1)\sup_{0\le t\le T}|\Delta X_t^0|^2+|\Delta\xi|^2\right]+\frac{1}{\epsilon}\|\Delta\ii\|^2
	\end{align*}
	Plugging \eqref{main2_lem1_10.5} into above, we have
	\begin{align*}
		2\gamma C_f\e\left[\int_0^T|\Delta u_t|^2dt\right]\le \ & (L_m+\epsilon)\gamma(T+1)(C_2+1)\e\left[ \int_0^T|\Delta u_t|^2 \right]\\
		& +l^2_{x^0}(T+1)\left(\frac{3}{\epsilon}+(L+1)C_2\right)\e\left[\sup_{0\le t\le T}|\Delta X_t^0|^2\right]\\
		& +\left(\frac{1}{\epsilon}+(L+1)(T+1)C_2\right)\left(\e[|\Delta\xi|^2]+\|\Delta\ii\|^2\right).
	\end{align*}
	We now set $\delta_1:=2^{-1}(T+1)^{-1}(C_2+1)^{-1}$, which depends only on $(L,T)$. When $L_mC_f^{-1}\le\delta_1$, we choose $\epsilon=2^{-1}(T+1)^{-1}(C_2+1)^{-1}C_f$, then we have
	\begin{equation}\label{main2_lem1_15}
		\begin{split}
			\gamma \e\left[\int_0^T|\Delta u_t|^2dt\right]\le\ & 6l^2_{x^0}(L+1)^2(T+1)^2(2C_2+1)\e\left[\sup_{0\le t\le T}|\Delta X_t^0|^2\right]\\
			&+2(L+1)^2(T+1)(2C_2+1)\left(\e[|\Delta\xi|^2]+\|\Delta\ii\|^2\right).
		\end{split}
	\end{equation}
	Plugging \eqref{main2_lem1_15} into \eqref{main2_lem1_10.5}, we have
	\begin{align*}
		&\e\left[\sup_{0\le t\le T}|(\Delta X_t,\Delta p_t)|^2+\int_0^T |(\Delta q_t,\Delta \tilde{q}_t)|^2 dt\right]\\
		\le\ & 12 l_{x^0}^2 (L+1)^2(T+1)^2(C_2+1)^2\e\left[ \sup_{0\le t\le T}|\Delta X^0_t|^2\right]\\
		& +C(L,T)\left(\e[|\Delta\xi|^2]+\|\Delta\ii\|^2\right).
	\end{align*}
	We know from Lemma~\ref{mm:lemma:u} that
	\begin{equation*}
		|\Delta u_t|\le \frac{L^2}{2}\left(|\Delta X_t|+|\Delta p_t|+|\Delta q_t|+|\Delta\tilde{q}_t|\right)+\frac{l_{x^0}L}{2} |\Delta X_t^0|,
	\end{equation*}
	so we eventually have 
	\begin{equation}\label{main2_lem1_b}
		\begin{split}
			\|\Delta\Theta\|^2\le  &24 l_{x^0}^2 (L+1)^4(T+1)^3(C_2+1)^2\e\left[ \sup_{0\le t\le T}|\Delta X^0_t|^2\right]\\
			&+C(L,T)\left(\e[|\Delta\xi|^2]+\|\Delta\ii\|^2\right).
		\end{split}
	\end{equation}
	
	In view of \eqref{main2_lem1_a} and \eqref{main2_lem1_b}, we have
	\begin{equation*}
		\begin{split}
			\|(\Delta\Theta^0,\Delta\Theta)\|^2_{\mathbb{S}}\le  & 144 l_{x^0}^2 l_m^2 (L+1)^{10}(T+1)^6 (C_1+1)^2 (C_2+1)^2 \|(\Delta\Theta^0,\Delta\Theta)\|^2_{\mathbb{S}} \\
			&+C(L,T)\left(\e[|\Delta\xi^0|^2+|\Delta\xi|^2]+\|(\Delta\ii^0,\Delta\ii)\|_{\iii}^2\right).
		\end{split}
	\end{equation*}
	Now we set $\delta_2:=24^{-1} (L+1)^{-5}(T+1)^{-3} (C_1+1)^{-1} (C_2+1)^{-1}$, which depends only on $(L,T)$. When $l_{x^0}l_m\le\delta_2$, we have
	\begin{equation}\label{prop_0}
		\begin{split}
			\|(\Delta\Theta^0,\Delta\Theta)\|^2_{\mathbb{S}}\le  C(L,T)\left(\e[|\Delta\xi^0|^2+|\Delta\xi|^2]+\|(\Delta\ii^0,\Delta\ii)\|_{\iii}^2\right).
		\end{split}
	\end{equation}
\end{proof}

We now have the solvability of FBSDEs \eqref{fbsde2}. Here, we also give boundedness of the solution of FBSDEs \eqref{fbsde2}. The proof of the following result is similar as that of \eqref{prop_0}, which is omitted here.
\begin{proposition}\label{prop}
	Let Assumptions (H1)-(H4) be satisfied. Then, there exist $\delta>0$ depending only on $(L,T)$, such that when $L_mC_f^{-1}\vee l_{x^0}l_m\le\delta$, the unique solution $(\Theta^{0},\Theta)$ of FBSDEs \eqref{fbsde2} satisfies	
	\begin{equation*}
		\begin{split}
			\|(\Theta^0,\Theta)\|^2_{\mathbb{S}}\le  C\left(1+\e[|\xi^0|^2+|\xi|^2]\right)
		\end{split}
	\end{equation*}
	for a positive constant $C$ depending only on $(L,T)$.
\end{proposition}
\section{Solvability of Problem~\ref{def1}}\label{sec_nash}
In this section, we show how the solution of Problem~\ref{def2} can provide an $\mathcal{O}(\frac{1}{\sqrt{N}})$-Nash equilibrium for Problem~\ref{def1}. We always suppose that Assumptions (H1)-(H5) hold true. 

For the Brownian motion $(W^0,W)$ and the initial data $(\xi^0,\xi)$, we denote by
\begin{equation}\label{solution}
	\left(\bar{X}^0,\bar{p}^0;\bar{q}^0;\bar{X},\bar{p};\bar{q},\bar{\tilde{q}}\right)\in(\sr^2_{\f^0})^2\times\lr^2_{\f^0}\times(\sr^2_{\f})^2\times(\lr^2_{\f})^2 (0,T)
\end{equation}
the solution of FBSDEs \eqref{fbsde2}, and set $m_t:=\lr(\bar{X}_t|\f_t^0),\ 0\le t\le T$. Here, $\f^0$ is the natural filtration of $(\xi^0,W^0)$ and $\f$ is the natural filtration of $(\xi^0,\xi,W^0,W)$. Given $(\boldsymbol{\xi},\boldsymbol{W})$ independent of $(\xi^0,W^0)$ and $\{(\bar{X}_t^0,m_t),\ 0\le t\le T \}$ defined as above, we consider the following FBSDEs for $(\boldsymbol{X},\boldsymbol{p},\boldsymbol{q},\boldsymbol{\tilde{q}})$:
\begin{equation}\label{fbsde3}
	\left\{
	\begin{aligned}
		&d\boldsymbol{X}_t=b(t,\boldsymbol{X}_t,\hat{u}(t,\boldsymbol{X}_t,\boldsymbol{p}_t,\boldsymbol{q}_t,\boldsymbol{\tilde{q}}_t,\bar{X}^0_t),m_t)dt\\
		&\qquad\quad+\sigma(t,\boldsymbol{X}_t,\hat{u}(t,\boldsymbol{X}_t,\boldsymbol{p}_t,\boldsymbol{q}_t,\boldsymbol{\tilde{q}}_t,\bar{X}^0_t),m_t)d\boldsymbol{W}_t\\
		&\qquad\quad+\tilde{\sigma}(t,\boldsymbol{X}_t,\hat{u}(t,\boldsymbol{X}_t,\boldsymbol{p}_t,\boldsymbol{q}_t,\boldsymbol{\tilde{q}}_t,\bar{X}^0_t),m_t)d{W}^0_t,\quad t\in(0,T];\\
		&d\boldsymbol{p}_t=- H_x(t,\boldsymbol{X}_t,\boldsymbol{p}_t,\boldsymbol{q}_t,\boldsymbol{\tilde{q}}_t,\hat{u}(t,\boldsymbol{X}_t,\boldsymbol{p}_t,\boldsymbol{q}_t,\boldsymbol{\tilde{q}}_t,\bar{X}^0_t),m_t,{X}^0_t)dt\\
		&\qquad\quad+\boldsymbol{q}_td\boldsymbol{W}_t+\boldsymbol{\tilde{q}}_td{W}^0_t,\quad t\in[0,T);\\
		&\boldsymbol{X}_0=\boldsymbol{\xi},\  \boldsymbol{p}_T=g_x(\boldsymbol{X}_T,m_T,\bar{X}^0_T).
	\end{aligned}
	\right.
\end{equation}
As a direct consequence of Theorem~\ref{main2_thm}, FBSDEs \eqref{fbsde3} has a unique solution $(\boldsymbol{X},\boldsymbol{p},\boldsymbol{q},\boldsymbol{\tilde{q}})$, which is, in the sense of  Carmona and Delarue \cite[Definition 1.17]{book_mfg}, a solution  of FBSDEs \eqref{fbsde3}  in the random environment $\boldsymbol{\mathcal{H}}:=(W^0,\bar{X}^0,m,\boldsymbol{\xi},\boldsymbol{W})$. In the  random environment $\mathcal{H}:=(W^0,\bar{X}^0,m,{\xi},{W})$, $(\bar{X},\bar{p},\bar{q},\bar{\tilde{q}})$  is the unique solution of the underlying FBSDEs. In Problem~\ref{def1},   $(W^1,\ldots, W^N)$ is an $N$-dimensional Brownian motion, and the random variables $\xi^i,\ 1\le i\le N,$ are  independent copies of the random variable $\xi$  and are independent of $(W^1,\ldots, W^N)$. For $1\le i\le N$, $(\xi^i, W^i)$ are independent of $(\xi^0,W^0)$, and $\f^i$ is the natural filtration of $(\xi^0,\xi^i,W^0,W^i)$  augmented by all the $\mathbb{P}$-null sets. We denote by $(\bar{X}^i,\bar{p}^i,\bar{q}^i,\bar{\tilde{q}}^i)\in (\sr^2_{\f^i}(0,T))^2\times(\lr^2_{\f^i}(0,T))^2$ the solution of \eqref{fbsde3} in the random environment $\mathcal{H}^i:=(W^0,\bar{X}^0,m,\xi^i,W^i)$. In view of~\cite[Theorem 1.33]{book_mfg} and the identical distributions of the $N$ random variables $\xi^i,\ 1\le i\le N$,  we have 
\begin{equation*}
	\lr(\bar{X}_t^i|\f^0_t)=\lr(\bar{X}_t|\f^0_t)=m_t, \ t\in[0,T],\   \text{a.s.},\ 1\le j\le N.
\end{equation*}
Therefore, for $1\le i\le N$,
\begin{equation*}
	(\bar{X}^0,\bar{p}^0,\bar{q}^0,\bar{X}^i,\bar{p}^i,\bar{q}^i,\bar{\tilde{q}}^i)\in(\sr^2_{\f^0})^2\times\lr^2_{\f^0}\times(\sr^2_{\f^i})^2\times(\lr^2_{\f^i})^2(0,T)
\end{equation*}
is actually the solution of FBSDEs \eqref{fbsde2} with the Brownian motion and the initial $(W, \xi):=(W^i, \xi^i)$, and for $1\le i,j\le N$, we have $\lr(\bar{X}_t^i|\f^0_t)=\lr(\bar{X}_t^j|\f^0_t)=m_t$. We set
\begin{align*}
	&\bar{u}^0_t:=\hat{u}^0(t,\bar{X}^0_t,\bar{p}^0_t,\bar{q}^0_t,m_t),\\
	&\bar{u}^i_t:=\hat{u}(t,\bar{X}^i_t,\bar{p}^i_t,\bar{q}^i_t,\bar{\tilde{q}}^i_t,\bar{X}^0_t),\quad 1\le i\le N.
\end{align*}
From Theorem~\ref{thm:smp},  we know that $(\bar{u}^0,\bar{u}^i)$ is a solution of Problem~\ref{def2} with $(W,\xi):=(W^i,\xi^i)$ for $1\le i\le N$. 

\begin{remark}
	We refer to Nourian and  Caines~\cite{NM} for a discussion on the conditional distribution of $\{\bar X^i,\ 1\le i\le N\}$ under a different setting. We also refer to  Carmona and Delarue \cite[Chapter 1]{book_mfg} for more details about FBSDEs in a random environment. 
	
	Since $m_t=\lr(\bar{X}_t|\f_t^0)$, we can see that FBSDEs \eqref{fbsde3} in the random environment $\mathcal{H}$ is a system of conditional distribution dependent FBSDEs for $(\bar{X},\bar{p},\bar{q},\bar{\tilde{q}})$. Following the method of our previous work \cite[Section 5]{HT} and Ahuja et al. \cite[Section 3.3]{SAWR}, we know that there is a deterministic function $U:[0,T]\times\br\times\pr_2(\br)\to\br$ (which also depends on $\bar{X}^0$), such that	
	\begin{equation*}
		\bar{p}_t=U(t,\bar{X}_t,\lr(\bar{X}_t|\f^0_t)),  \quad t\in[0,T],\   \text{a.s.}
	\end{equation*}
	Furthermore, if the diffusion $(\sigma,\tilde{\sigma})$ does not depend on the control $u$, then, the map $\hat{u}(\cdot)$ does not depend on $(q,\tilde{q})$. Therefore, $\bar{X}$ is actually the solution of the following McKean-Vlasov SDE:
	\begin{equation}\label{MV}
		\begin{split}
			\bar{X}_t=\xi&+\int_0^t b\left(s,\bar{X}_s,\hat{u}\left(s,\bar{X}_s,U(t,\bar{X}_s,\lr(\bar{X}_s|\f^0_s)),\bar{X}^0_s\right),\lr(\bar{X}_s|\f_t^0)\right)ds\\
			&+\int_0^t \sigma \left(s,\bar{X}_s,\hat{u}\left(s,\bar{X}_s,U(t,\bar{X}_s,\lr(\bar{X}_s|\f^0_s)),\bar{X}^0_s\right),\lr(\bar{X}_s|\f_t^0)\right)dW_s\\
			&+\int_0^t \tilde{\sigma} \left(s,\bar{X}_s,\hat{u}\left(s,\bar{X}_s,U(t,\bar{X}_s,\lr(\bar{X}_s|\f^0_s)),\bar{X}^0_s\right),\lr(\bar{X}_s|\f_t^0)\right)dW^0_s,
		\end{split}
	\end{equation}
	and $\{\bar{X}^i,\ 1\le i\le N\}$ are samples of the McKean-Vlasov SDE \eqref{MV}. We also refer to Carmona and Delarue \cite[Chapter 2.1]{book_mfg} for a further discussion.
\end{remark}

In view of Assumption (H5), we  use the notations
\begin{align*}
	&b_0^0(t,m)=\int_\br \overline{b_0^0}(t,y)m(dy),\quad \sigma_0^0(t,m)=\int_\br \overline{\sigma_0^0}(t,y)m(dy),\\
	&b_0(t,m)=\int_\br \overline{b_0}(t,y)m(dy),\quad \sigma_0(t,m)=\int_\br \overline{\sigma_0}(t,y)m(dy),\quad \tilde{\sigma}_0(t,m)=\int_\br \overline{\tilde{\sigma}_0}(t,y)m(dy),\\
	&f^0(t,x^0,u^0,m)=\int_\br \overline{f^0}(t,x^0,u^0,y)m(dy),\quad g^0(x^0,m)=\int_\br \overline{g^0} (x^0,y)m(dy),\\
	&f(t,x,u,m,x^0)=\int_\br \overline{f}(t,x,u,y,x^0)m(dy),\quad g(x,m,x^0)=\int_\br \overline{g}(x,y,x^0)m(dy),
\end{align*}
with functions $(\overline{b_0^0},\overline{\sigma_0^0},\overline{b_0},\overline{\sigma_0},\overline{\tilde{\sigma}_0},\overline{f^0}, \overline{g^0}, \overline{f},\overline{g})$ being $L$-Lipschitz contimuous in $y\in\br$. Applying $(\bar{u}^0,\dots,\bar{u}^N)$ into the $(N+1)$-agent game, the dynamics of agents are as follows: for $1\le i\le N$,
\begin{equation}\label{def:X^iN}
	\begin{split}
		&\bar{X}^{0,N}_t=\xi^0+\int_0^t \bigg[\frac{1}{N}\sum_{j=1}^N \overline{b_0^0}(s,\bar{X}^{j,N}_s)+b_1^0(s)\bar{X}^{0,N}_s+b_2^0(s)\bar{u}_s^0\bigg]ds\\
		&\qquad\qquad +\int_0^t \bigg[\frac{1}{N}\sum_{j=1}^N \overline{\sigma_0^0}(s,\bar{X}^{j,N}_s)+\sigma_1^0(s)\bar{X}^{0,N}_s+\sigma_2^0(s)\bar{u}_s^0 \bigg]dW_s^0, \quad t\in[0,T],\\
		&\bar{X}^{i,N}_t=\xi^i+\int_0^t \bigg[\frac{1}{N}\sum_{j=1}^N \overline{b_0}(s,\bar{X}^{j,N}_s)+b_1(s)\bar{X}^{i,N}_s+b_2(s)\bar{u}_s^i \bigg]ds\\
		&\qquad\qquad+\int_0^t \bigg[\frac{1}{N}\sum_{j=1}^N \overline{\sigma_0}(s,\bar{X}^{j,N}_s)+\sigma_1(s)\bar{X}^{i,N}_s+\sigma_2(s)\bar{u}_s^i \bigg] dW_s^i\\
		&\quad\quad\quad\quad +\int_0^t \bigg[\frac{1}{N}\sum_{j=1}^N \overline{\tilde{\sigma}_0}(s,\bar{X}^{j,N}_s)+\tilde{\sigma}_1(s)\bar{X}^{i,N}_s+\tilde{\sigma}_2(t)\bar{u}_s^i \bigg] dW_s^0,\quad t\in[0,T].
	\end{split}
\end{equation}
The following lemma gives estimate of the distance between $\bar{X}^i$ and $\bar{X}^{i,N}$.

\begin{lemma}\label{epsilon_lemma}
	Let Assumptions (H1)-(H5) be satisfied. Then, we have for $0\le i\le N$,
	\begin{equation}\label{epsilon_lem_1}
		\sup_{0\le t\le T}\e\left[|\bar{X}_t^{i,N}-\bar{X}_t^{i}|^2\right]\le \frac{C}{N}\left(1+\e[|\xi^0|^2+|\xi^1|^2]\right)
	\end{equation}
	for a positive constant $C$ depending only on $(L,T)$.
\end{lemma}
\begin{proof}
	We set $\Delta X_t^i=\bar{X}_t^{i,N}-\bar{X}_t^{i}$ for $0\le i\le N$, then we have
	\begin{align*}
		&d\Delta X_t^0=\bigg[\bigg(\frac{1}{N}\sum_{j=1}^N \overline{b_0^0}(t,\bar{X}^{j,N}_t)-b_0^0(t,m_t)\bigg)+b_1^0(t)\Delta X_t^0\bigg]dt\\
		&\qquad\qquad+\bigg[\bigg(\frac{1}{N}\sum_{j=1}^N \overline{\sigma_0^0}(t,\bar{X}^{j,N}_t)-\sigma_0^0(t,m_t)\bigg)+\sigma_1^0(t)\Delta X_t^0 \bigg]dW_t^0,\quad t\in(0,T],\\
		&d\Delta X_t^i=\bigg[\bigg(\frac{1}{N}\sum_{j=1}^N \overline{b_0}(t,\bar{X}^{j,N}_t)-b_0(t,m_t) \bigg)+b_1(t)\Delta X_t^i \bigg]dt\\
		&\qquad\qquad +\bigg[\bigg(\frac{1}{N}\sum_{j=1}^N \overline{\sigma_0}(t,\bar{X}^{j,N}_t)-\sigma_0(t,m_t) \bigg)+\sigma_1(t)\Delta X_t^i \bigg]dW_t^i\\
		&\quad\quad\quad\quad +\bigg[\bigg(\frac{1}{N}\sum_{j=1}^N \overline{\tilde{\sigma}_0}(t,\bar{X}^{j,N}_t)-\tilde{\sigma}_0(t,m_t)\bigg)+\tilde{\sigma}_1(t)\Delta X_t^i\bigg]dW_t^0,\quad t\in(0,T], 
	\end{align*}
	where $m_t=\lr(\bar{X}^{i}_t|\f_t^0)$ for all $1\le i\le N$. From Assumption (H5), we have
	\begin{equation*}
		\bigg|\frac{1}{N}\sum_{j=1}^N \overline{b_0^0}(t,\bar{X}^{j,N}_t)-b_0^0(t,m_t)\bigg|\le \frac{L}{N}\sum_{j=1}^N|\Delta X_t^j|+\bigg|\frac{1}{N}\sum_{j=1}^N \overline{b_0^0}(t,\bar{X}^{j}_t)-b_0^0(t,m_t)\bigg|.
	\end{equation*}    
	Therefore, from standard estimates for SDEs, we have
	\begin{equation}\label{6}
		\begin{split}
			&\e\Big[\sup_{0\le t\le T}|\Delta X_t^0|^2\Big]\\
			\le\ & C\bigg(\e\bigg[\frac{1}{N}\sum_{j=1}^N\int_0^T|\Delta X_t^j|^2dt\bigg]+\e\bigg[\int_0^T \bigg|\frac{1}{N}\sum_{j=1}^N \overline{b_0^0}(t,\bar{X}^{j}_t)-b_0^0(t,m_t)\bigg|^2 dt\bigg]\\
			&\qquad +\e\bigg[\int_0^T \bigg|\frac{1}{N}\sum_{j=1}^N \overline{\sigma_0^0}(t,\bar{X}^{j}_t)-\sigma_0^0(t,m_t)\bigg|^2 dt\bigg]\bigg),
		\end{split}
	\end{equation}
	for a positive constant $C$ depending only on $(L,T)$. In the rest of the proof, $C$ always stands for a constant depending only on $(L,T)$, which may vary from line to line. Similarly, we have
	\begin{equation}\label{7}
		\begin{split}
			&\e\Big[\sup_{0\le t\le T}|\Delta X_t^i|^2\Big]\\
			\le\ &  C\bigg(\e\bigg[\frac{1}{N}\sum_{j=1}^N\int_0^T|\Delta X_t^j|^2dt\bigg]+\e\bigg[\int_0^T \bigg|\frac{1}{N}\sum_{j=1}^N \overline{b_0^0}(t,\bar{X}^{j}_t)-b_0^0(t,m_t)\bigg|^2dt\bigg]\\
			&\qquad +\e\bigg[\int_0^T \bigg|\frac{1}{N}\sum_{j=1}^N \overline{\sigma_0^0}(t,\bar{X}^{j}_t)-\sigma_0^0(t,m_t)\bigg|^2dt\bigg]\bigg).
		\end{split}
	\end{equation}    
	From \eqref{6}, \eqref{7} and Gronwall's inequality, we have
	\begin{equation}\label{8}
		\begin{split}
			&\e\Big[\sup_{0\le t\le T}|\Delta X_t^0|^2\Big]+\frac{1}{N}\sum_{j=1}^N\e\Big[\sup_{0\le t\le T}|\Delta X_t^j|^2\Big]\\
			\le\ & C\e\bigg[\int_0^T\bigg( \bigg|\frac{1}{N}\sum_{j=1}^N \overline{b_0^0}(t,\bar{X}^{j}_t)-b_0^0(t,m_t)\bigg|^2 + \bigg|\frac{1}{N}\sum_{j=1}^N \overline{\sigma_0^0}(t,\bar{X}^{j}_t)-\sigma_0^0(t,m_t)\bigg|^2 \bigg)dt\bigg].
		\end{split}
	\end{equation}
	We set $Y_t^j:=\overline{b_0^0}(t,\bar{X}^{j}_t)-b_0^0(t,m_t)$, then, we know from the definition of $m_t$ that $Y_t^j=\overline{b_0^0}(t,\bar{X}^{j}_t)-\e[\overline{b_0^0}(t,\bar{X}^{j}_t)|\f_t^0]$ and $\e[Y_t^j|\f_t^0]=0$. Recall that $W^j$ and $W^{j'}$ are independent when $1\le j\neq j'\le N$, so we have $\e[Y_t^j Y_t^{j'}|\f_t^0]=\e[Y_t^j|\f_t^0]\e[Y_t^{j'}|\f_t^0]=0$, and then $\e[Y_t^j Y_t^{j'}]=\e\big[\e[Y_t^j Y_t^{j'}|\f_t^0]\big]=0$. Since $\{Y_t^j,1\le j\le N\}$ are identically distributed, we have 
	\begin{equation}\label{5}
		\begin{split}
			\e\bigg[\bigg|\frac{1}{N}\sum_{j=1}^N \overline{b_0^0}(t,\bar{X}^{j}_t)-b_0^0(t,m_t)\bigg|^2\bigg]&=\frac{1}{N^2}\sum_{j=1}^N\e\left[|Y_t^j|^2\right]+\frac{1}{N^2}\sum_{1\le j\neq j'\le N}\e\left[Y_t^jY_t^{j'}\right]\\
			&=\frac{1}{N^2}\sum_{j=1}^N\e\left[|Y_t^j|^2\right].
		\end{split}
	\end{equation}
	Here, from Assumptions (H1) and (H5), the fact that $\{\bar{X}^{j}_t,\ 1\le j\le N\}$ are identically distributed, and standard estimates for SDEs, we have
	\begin{equation}\label{5_1}
		\begin{split}
			\frac{1}{N^2}\sum_{j=1}^N\e\left[|Y_t^j|^2\right]&=\frac{1}{N}\e\left[|Y_t^1|^2\right]=\frac{1}{N}\e\left[\left|\overline{b_0^0}(t,\bar{X}^{1}_t)-b_0^0(t,m_t)\right|^2\right]\\
			&\le \frac{8L^2}{N}\left(1+\e[|\bar{X}^{1}_t|^2]\right)\\
			&\le \frac{C}{N}\left(1+\e\left[|\xi^1|^2+\int_0^T |\bar{u}^{1}_t|^2dt\right]\right).
		\end{split}
	\end{equation}
	From \eqref{5} and \eqref{5_1}, we have    
	\begin{equation}\label{5_2}
		\begin{split}
			\e\bigg[\bigg|\frac{1}{N}\sum_{j=1}^N \overline{b_0^0}(t,\bar{X}^{j}_t)-b_0^0(t,m_t)\bigg|^2\bigg]& \le \frac{C}{N}\left(1+\e[|\xi^1|^2+\int_0^T |\bar{u}^{1}_t|^2dt]\right).
		\end{split}
	\end{equation}
	Similarly, we have
	\begin{equation}\label{5'}
		\begin{split}
			\e\bigg[\bigg|\frac{1}{N}\sum_{j=1}^N \overline{\sigma_0^0}(t,\bar{X}^{j}_t)-\sigma_0^0(t,m_t)\bigg|^2\bigg] \le \frac{C}{N}\left(1+\e[|\xi^1|^2+\int_0^T |\bar{u}^{1}_t|^2dt]\right).
		\end{split}
	\end{equation} 
	Plugging \eqref{5_2} and \eqref{5'} into \eqref{8}, 
	\begin{equation}\label{5''}
		\sup_{0\le t\le T}\e\left[\left|\bar{X}_t^{i,N}-\bar{X}_t^{i}\right|^2\right] \le \frac{C}{N}\left(1+\e\left[|\xi^1|^2+\int_0^T |\bar{u}^{1}_t|^2dt\right]\right).
	\end{equation}	
	Recall that $(\bar{X}^0,\bar{p}^0,\bar{q}^0,\bar{X}^1,\bar{p}^1,\bar{q}^1,\bar{\tilde{q}}^1)$ is the solution of FBSDEs \eqref{fbsde2} with the Brownian motion $W:=W^1$ and the initial $\xi:=\xi^1$. Therefore, from Proposition~\ref{prop}, we have
	\begin{equation}\label{boundedness}
		\begin{split}
			\e\left[\int_0^T(|\bar{u}^{0}_t|^2+|\bar{u}^{1}_t|^2) dt \right]\le C\left(1+\e[|\xi^0|^2+|\xi^1|^2]\right).
		\end{split}
	\end{equation}
	Plugging \eqref{boundedness} into \eqref{5''}, we obtain \eqref{epsilon_lem_1}.	
\end{proof}

Our main result of this section is stated as follows.

\begin{theorem}\label{thm:nash}
	Let Assumptions (H1)-(H5) be satisfied. Then, $(\bar{u}^0,\dots,\bar{u}^N)$ is a  $\frac{C}{\sqrt{N}}$-Nash equilibrium of the $(N+1)$-agent game, where $C$ is a positive constant depending only on $(L,T,\e[|\xi^0|^2],\e[|\xi^1|^2])$. That is, $(\bar{u}^0,\dots,\bar{u}^N)$ is an $\mathcal{O}(\frac{1}{\sqrt{N}})$-Nash equilibrium of the $(N+1)$-agent game.
\end{theorem}
\begin{proof}
	\textbf{Case 1 (strategy change of the major agent)}:\\
	While the minor agents are using the responses $(\bar{u}^1,\dots,\bar{u}^N)$, the change from strategy $\bar{u}^0$ to some $u^0\in\lr_{\f^0}^2(0,T)$ of the major agent yields
	\begin{equation*}
		\begin{split}
			&X^{0}_t=\xi^0+\int_0^t \bigg[ \frac{1}{N}\sum_{j=1}^N \overline{b_0^0}(s,\bar{X}^{j,N}_s)+b_1^0(s){X}^{0}_s+b_2^0(s){u}^0_s \bigg]ds\\
			&\qquad\qquad+\int_0^t \bigg[ \frac{1}{N}\sum_{j=1}^N \overline{\sigma_0^0}(s,\bar{X}^{j,N}_s)+\sigma_1^0(s){X}^{0}_s+\sigma_2^0(s){u}^0_s \bigg] dW_s^0, \quad t\in[0,T].
		\end{split}
	\end{equation*}
	Note that the $i$-th minor agent's state is still $\bar{X}^{i,N}$ for $1\le i\le N$. We define
	\begin{equation*}
		\begin{split}
			&\hat{X}^0_t:=\xi^0+\int_0^t \bigg[ b_0^0(s,m_s)+b_1^0(s)\hat{X}^0_s+b_2^0(s){u}^0_s \bigg] ds\\
			&\qquad\qquad+\int_0^t \bigg[\sigma_0^0(s,m_s)+\sigma_1^0(s)\hat{X}^0_s+\sigma_2^0(s){u}^0_s \bigg] dW_s^0,\quad t\in[0,T].
		\end{split}
	\end{equation*}
	Similar to the proof Lemma~\ref{epsilon_lemma}, since the drift and diffusion functions are both linear in control and state, we have
	\begin{align}\label{v1}
		&\e\Big[\sup_{0\le t\le T}|{X}_t^{0}-\hat{X}_t^{0}|^2\Big] \le \frac{C_1}{N}\left(1+\e\left[|\xi^1|^2+\int_0^T |\bar{u}^{1}_t|^2dt\right]\right).
	\end{align}
	Here and in the rest of the proof, $C_1$ always stands for a constant depending only on $(L,T)$, which may vary from line to line. Recall that
	\begin{equation}\label{v_2}
		\begin{split}
			J_0(u^0|\bar{u}^{-0})=&\e\bigg[\int_0^T\frac{1}{N}\sum_{j=1}^N \overline{f^0}(t,{X}_t^0,u^0_t,\bar{X}_t^{j,N})dt+\frac{1}{N}\sum_{j=1}^N \overline{g^0}({X}_T^0,\bar{X}_T^{j,N})\bigg].
		\end{split}
	\end{equation}    
	From Assumption (H2), standard estimates for SDEs, Lemma~\ref{epsilon_lemma}, \eqref{boundedness} and \eqref{v1}, we have for any $1\le j\le N$,   
	\begin{align}
		&\e\left[\int_0^T\left|\overline{f^0}(t,{X}_t^0,u^0_t,\bar{X}_t^{j,N})-\overline{f^0}(t,\hat{X}_t^0,u^0_t,\bar{X}_t^j)\right|dt\right] \notag\\
		\le\ & 2L\e\bigg[\int_0^T\left(1+|{X}_t^0|+ |\hat{X}_t^0|+|u^0_t|+|\bar{X}_t^{j,N}|+|\bar{X}_t^{j}|\right) \notag\\
		&\qquad\qquad\quad \cdot  \left(|{X}_t^0-\hat{X}_t^0|+|\bar{X}_t^{j,N}-\bar{X}_t^{j}|\right) dt\bigg] \notag\\
		\le\ & C_1\left(1+\e\left[|\xi^0|^2+|\xi^1|^2+\int_0^T \left(|u^0_t|^2+|\bar{u}_t^{0}|^2+|\bar{u}_t^{1}|^2\right) dt\right]\right)^{\frac{1}{2}} \notag\\
		&\qquad \cdot \left( \e\bigg[\sup_{0\le t\le T}|{X}_t^{0}-\hat{X}_t^{0}|^2+\sup_{0\le t\le T}|\bar{X}_t^{j,N}-\bar{X}_t^{j}|^2\bigg]\right)^\frac{1}{2} \notag\\
		\le\ & \frac{C_1}{\sqrt{N}}\left(1+\e\left[|\xi^0|^2+|\xi^1|^2+\int_0^T \left(|u^0_t|^2+|\bar{u}_t^{0}|^2+|\bar{u}_t^{1}|^2\right) dt\right]\right)^{\frac{1}{2}} \notag\\
		&\qquad \cdot \left( 1+\e\left[|\xi^1|^2+\int_0^T |\bar{u}^{1}_t|^2dt\right]\right)^\frac{1}{2} \notag\\
		\le\ & \frac{C}{\sqrt{N}} \left(1+\e\left[\int_0^T |u^0_t|^2 dt\right]\right).\label{v_3}
	\end{align}
	Here and  in the rest of the proof,  $C$ is a positive constant depending only on the quantities
	\begin{align}\label{v_5}
		L,\ T,\ \e[|\xi^0|^2],\ \e[|\xi^1|^2].
	\end{align} 
	Plugging \eqref{v_3} (and a similar inequality for $\overline{g^0}$) into \eqref{v_2}, we have
	\begin{equation}\label{v_4}
		\begin{split}
			J_0(u^0|\bar{u}^{-0})\geq\ &\e\bigg[\int_0^T\frac{1}{N}\sum_{j=1}^N \overline{f^0}(t,\hat{X}_t^0,u^0_t,\bar{X}_t^j)dt+\frac{1}{N}\sum_{j=1}^N \overline{g^0}(\hat{X}_T^0,\bar{X}_T^j)\bigg]\\
			&-\frac{C}{\sqrt{N}}\left(1+\e\left[\int_0^T|u_t^0|^2 dt\right]\right).
		\end{split}
	\end{equation} 
	Similar to the proof of \eqref{5_2}, we have
	\begin{equation}\label{3}
		\begin{split}
			&\e\bigg[\int_0^T \bigg|\frac{1}{N}\sum_{j=1}^N(\overline{f^0}(t,\hat{X}_t^0,u^0_t,\bar{X}_t^j)-f^0(t,\hat{X}_t^0,u^0_t,m_t))\bigg|^2dt\\
			&\quad +\bigg|\frac{1}{N}\sum_{j=1}^N(\overline{g^0}(\hat{X}^0_T,\bar{X}_T^j)-g^0(\hat{X}^0_T,m_T))\bigg|^2\bigg]\le \frac{C}{N} \bigg(1+\e\bigg[\int_0^T|u_t^0|^2 dt\bigg]^2\bigg).
		\end{split}
	\end{equation}
	Plugging \eqref{3} into \eqref{v_4}, we have
	\begin{equation}\label{epsilon_thm_3}
		\begin{split}
			J_0(u^0|\bar{u}^{-0})\geq\ &\e\left[\int_0^T f^0(t,\hat{X}_t^0,u^0_t,m_t)dt+ g^0(\hat{X}_T^0,m_T)\right]\\
			& -\frac{C}{\sqrt{N}}\e\left[\int_0^T|u_t^0|^2 dt\right]-\frac{C}{\sqrt{N}}\\
			=\ & J_0(u^0|m)-\frac{C}{\sqrt{N}}\e\left[\int_0^T|u_t^0|^2 dt\right]-\frac{C}{\sqrt{N}}.
		\end{split}
	\end{equation}
	In a similar way, we have
	\begin{equation}\label{epsilon_thm_4}
		J_0(\bar{u}^0|\bar{u}^{-0})\le J_0(\bar{u}^0|m)+\frac{C}{\sqrt{N}}.
	\end{equation}
	From Theorem~\ref{thm:smp} we know that
	\begin{equation}\label{epsilon_thm_5}
		J_0(\bar{u}^0|m)=\inf_{u^0\in \lr^2_{\f^0}(0,T)}J_0(u^0|m).
	\end{equation}  
	From the following Lemma~\ref{lem:1}, we know that
	\begin{equation}\label{epsilon_thm_5.1}
		\begin{split}
			J_0(u^0|m)-\frac{C}{\sqrt{N}}\e\left[\int_0^T|u_t^0|^2 dt\right]&\geq \inf_{u^0\in \lr^2_{\f^0}(0,T)}\left\{J_0(u^0|m)-\frac{C}{\sqrt{N}}\e\left[\int_0^T|u_t^0|^2 dt\right]\right\}\\
			&\geq \inf_{u^0\in \lr^2_{\f^0}(0,T)}J_0(u^0|m)-\frac{C}{\sqrt{N}}.
		\end{split}
	\end{equation}      
	Therefore, we know from \eqref{epsilon_thm_3}-\eqref{epsilon_thm_5.1} that
	\begin{equation*}\label{epsilon_thm_6}
		J_0(\bar{u}^0|\bar{u}^{-0})-\frac{C}{\sqrt{N}}\le J_0(u^0|\bar{u}^{-0}).
	\end{equation*}
	
	\textbf{Case 2 (strategy change of a minor agent)}:\\
	Without loss of generality, we assume that the $1$-th minor agent changes his/her best response control strategy $\bar{u}^1$ to $u\in\lr_{\f^1}^2(0,T)$. This leads to
	\begin{align*}
		&X^{0}_t=\xi^0+\int_0^t \bigg[\frac{1}{N}\sum_{j=1}^N \overline{b_0^0}(s,{X}^{j}_s)+b_1^0(s){X}^{0}_s+b_2^0(s)\bar{u}^0_s\bigg]ds\\
		&\qquad\qquad +\int_0^t \bigg[\frac{1}{N}\sum_{j=1}^N \overline{\sigma_0^0}(t,{X}^{j}_s)+\sigma_1^0(s){X}^{0}_s+\sigma_2^0(s)\bar{u}^0_s\bigg] dW_s^0,\quad t\in[0,T],\\
		&{X}^{1}_t=\xi^1+\int_0^t \bigg[\frac{1}{N}\sum_{j=1}^N \overline{b_0}(s,{X}^{j}_s)+b_1(s){X}^{1}_s+b_2(s){u}_s\bigg]ds\\
		&\qquad\qquad+\int_0^t \bigg[ \frac{1}{N}\sum_{j=1}^N \overline{\sigma_0}(s,{X}^{j}_s)+\sigma_1(t){X}^{1}_t+\sigma_2(s)u_s\bigg]dW_s^1\\
		&\quad\quad\quad\quad +\int_0^t \bigg[\frac{1}{N}\sum_{j=1}^N \overline{\tilde{\sigma}_0}(s,{X}^{j}_s)+\tilde{\sigma}_1(s){X}^{1}_s+\tilde{\sigma}_2(s){u}_s\bigg]dW_s^0,\quad  t\in[0,T],\\
		&{X}^{i}_t=\xi^i+\int_0^t \bigg[\frac{1}{N}\sum_{j=1}^N \overline{b_0}(s,{X}^{j}_s)+b_1(s){X}^{i}_s+b_2(s)\bar{u}_s^i \bigg]ds\\
		&\quad\quad\quad\quad +\int_0^t \bigg[\frac{1}{N}\sum_{j=1}^N \overline{\sigma_0}(s,{X}^{j}_s)+\sigma_1(s){X}^{i}_s+\sigma_2(s)\bar{u}_s^i\bigg]dW_s^i\\
		&\quad\quad\quad\quad +\int_0^t \bigg[\frac{1}{N}\sum_{j=1}^N \overline{\tilde{\sigma}_0}(s,{X}^{j}_s)+\tilde{\sigma}_1(s)\bar{X}^{i}_s+\tilde{\sigma}_2(s)\bar{u}_s^i\bigg] dW_s^0,\quad t\in[0,T],\quad 2\le i\le N.
	\end{align*}
	We have from standard estimates for SDEs, 
	\begin{equation*}\label{epsilon_thm_7}
		\sup_{j=0,2,3,\dots,N}\e\Big[\sup_{0\le t\le T}|X_t^j-\bar{X}_t^{j,N}|^2\Big] \le \frac{C_1}{N^2}\left(1+\e[|\xi^1|^2+\int_0^T( |{u}_t|^2+|\bar{u}^{1}_t|^2)dt]\right),
	\end{equation*}
	and further from Lemma~\ref{epsilon_lemma}, 
	\begin{equation}\label{epsilon_thm_9}
		\begin{split}
			\sup_{j=0,2,3,\dots,N}\e[\sup_{0\le t\le T}|X_t^{j}-\bar{X}_t^{j}|^2]\le \ &  \frac{C_1}{N}\left(1+\e\left[|\xi^1|^2+\int_0^T( |{u}_t|^2+|\bar{u}^{1}_t|^2)dt\right]\right)\\
			& + \frac{C_1}{N^2}\e\left[\int_0^T |{u}_t|^2dt\right].
		\end{split}
	\end{equation}
	Here and in the rest of the proof, $C_1$ always stands for a constant depending only on $(L,T)$, which may vary from  line to line. We define 
	\begin{equation}\label{1}
		\begin{split}
			&\hat{X}^{1,N}_t:=\xi^1+\int_0^t \bigg[\frac{1}{N}\bigg(\overline{b_0}(s,\hat{X}^{1,N}_s)+\sum_{j=2}^N \overline{b_0}(s,\bar{X}^{j}_s)\bigg)+b_1(s)\hat{X}^{1,N}_s+b_2(s){u}_s\bigg]ds\\
			&\ \ \quad\qquad +\int_0^t\bigg[ \frac{1}{N}\bigg(\overline{\sigma_0}(s,\hat{X}^{1,N}_s)+\sum_{j=2}^N \overline{\sigma_0}(s,\bar{X}^{j}_s)\bigg)+\sigma_1(s)\hat{X}^{1,N}_s+\sigma_2(s){u}_s\bigg]dW_s^1\\
			&\ \ \quad\qquad +\int_0^t\bigg[ \frac{1}{N}\bigg(\overline{\tilde{\sigma}_0}(s,\hat{X}^{1,N}_s)+\sum_{j=2}^N \overline{\tilde{\sigma}_0}(s,\bar{X}^{j}_s)\bigg)+\tilde{\sigma}_1(s)\hat{X}^{1,N}_s+\tilde{\sigma}_2(s){u}_s\bigg]dW_s^0.
		\end{split}		
	\end{equation}  
	Note that \eqref{1} is a SDE for $\hat{X}^{1,N}$ (given $u$ and $\{\bar{X}^j,\ 2\le j\le N\}$), whose coefficients are Lipschitz continuous in the state. Therefore, there is a unique solution $\hat{X}^{1,N}$ of \eqref{1} such that $\e[\sup_{0\le t\le T}|\hat{X}_t^{1,N}|^2]<+\infty$. We refer to \cite{JYXY} for more discussion on SDEs. From \eqref{epsilon_thm_9} and standard estimates for SDEs, we have
	\begin{equation}\label{epsilon_thm_10}
		\begin{split}
			\e[\sup_{0\le t\le T}|X_t^{1}-\hat{X}_t^{1,N}|^2]\le \ &  \frac{C_1}{N}\Big(1+\e[|\xi^1|^2+\int_0^T( |{u}_t|^2+|\bar{u}^{1}_t|^2)dt]\Big) \\
			& + \frac{C_1}{N^2}\e[\int_0^T |{u}_t|^2dt].
		\end{split}
	\end{equation}
	We define
	\begin{equation}\label{2}
		\begin{split}
			&\hat{X}^{1}_t:=\xi^1+\int_0^t \left[b_0(s,m_s)+b_1(s)\hat{X}^1_s+b_2(s){u}_s \right] ds\\
			&\qquad\qquad +\int_0^t \left[ \sigma_0(s,m_s)+\sigma_1(s)\hat{X}^1_s+\sigma_2(s){u}_s \right] dW_s^1\\
			&\qquad\qquad +\int_0^t \left[ \tilde{\sigma}_0(s,m_s)+\tilde{\sigma}_1(s)\hat{X}^1_s+\tilde{\sigma}_2(s){u}_s \right] dW_s^0,\quad t\in[0,T].
		\end{split}		
	\end{equation}
	Similar as \eqref{1}, equation \eqref{2} is a SDE for $\hat{X}^{1}$ (given $u$ and $m$), whose coefficients are Lipschitz continuous in the state. Therefore, there is a unique solution $\hat{X}^{1}$ of \eqref{2} such that $\e[\sup_{0\le t\le T}|\hat{X}_t^{1}|^2]<+\infty$. From \eqref{5_2} and standard estimates for SDEs, we have
	\begin{equation*}\label{epsilon_thm_11}
		\e\Big[\sup_{0\le t\le T}|\hat{X}_t^{1,N}-\hat{X}_t^{1}|^2\Big] \le \frac{C_1}{N}\left(1+\e[|\xi^1|^2+\int_0^T |\bar{u}^{1}_t|^2dt]\right)+ \frac{C(L,T)}{N^2}\e\left[\int_0^T |{u}_t|^2dt\right]. 
	\end{equation*}
	From the last inequality and \eqref{epsilon_thm_10}, we have
	\begin{equation}\label{epsilon_thm_12}
		\begin{split}
			\e\Big[\sup_{0\le t\le T}|{X}_t^{1}-\hat{X}_t^{1}|^2\Big]\le\  & \frac{C_1}{N}\left(1+\e\left[|\xi^1|^2+\int_0^T\left( |{u}_t|^2+|\bar{u}^{1}_t|^2\right)dt\right]\right) \\
			& + \frac{C_1}{N^2}\e\left[\int_0^T |{u}_t|^2dt\right]. 
		\end{split}
	\end{equation}  
	Recall that
	\begin{equation}\label{v_6}
		\begin{split}
			J_1(u|\bar{u}^{-1})=\e\bigg[\int_0^T\frac{1}{N}\sum_{j=1}^N \overline{f}(t,{X}_t^1,u_t,{X}_t^{j},X^0_t)dt+\frac{1}{N}\sum_{j=1}^N \overline{g}({X}_T^1,{X}_T^{j},X_T^0)\bigg].
		\end{split}
	\end{equation}    
	From Assumption (H2), Lemma~\ref{epsilon_lemma}, \eqref{epsilon_thm_9}, \eqref{epsilon_thm_12} and \eqref{boundedness}, similar as \eqref{v_3}, we have for any $2\le j\le N$,   
	\begin{equation}\label{v_7}
		\begin{split}
			&\sum_{j=2}^N\e[\int_0^T\frac{1}{N}|\overline{f}(t,{X}_t^1,u_t,{X}_t^{j},X^0_t)-\overline{f}(t,\hat{X}_t^1,u_t,\bar{X}_t^j,\bar{X}_t^0)|dt]\\
			\le\ & \frac{3L}{N}\sum_{j=2}^N\e\bigg[\int_0^T(1+|{X}_t^1|+ |\hat{X}_t^1|+|u_t|+|{X}_t^{j}|+|\bar{X}_t^{j}|+|X^0_t|+|\bar{X}_t^{0}|)\\
			&\qquad\qquad\qquad \cdot  \left(|{X}_t^1-\hat{X}_t^1|+|{X}_t^{j}-\bar{X}_t^{j}|+|{X}_t^0-\bar{X}_t^0|\right) dt\bigg]\\
			\le\ & C_1\left(1+\e\left[|\xi^0|^2+|\xi^1|^2+\int_0^T \left(|u_t|^2+|\bar{u}_t^{0}|^2+|\bar{u}_t^{1}|^2\right) dt\right]\right)^{\frac{1}{2}}\\
			&\qquad \cdot \Big( \e\Big[\sup_{0\le t\le T}|{X}_t^{1}-\hat{X}_t^{1}|^2+\sup_{0\le t\le T}|{X}_t^{j}-\bar{X}_t^{j}|^2+\sup_{0\le t\le T}|{X}_t^{0}-\bar{X}_t^{0}|^2\Big]\Big)^\frac{1}{2}\\
			\le\ & \frac{C_1}{\sqrt{N}}\left(1+\e\left[|\xi^0|^2+|\xi^1|^2+\int_0^T \left(|u_t|^2+|\bar{u}_t^{0}|^2+|\bar{u}_t^{1}|^2\right) dt\right]\right)\\
			\le\ & \frac{C}{\sqrt{N}} \left(1+\e\left[\int_0^T |u_t|^2 dt\right]\right).
		\end{split}
	\end{equation}
	Here and in the rest of the proof, $C$ always stands for a constant depending only on all the quantities in~\eqref{v_5}. For $j=1$, from Assumption (H2) and \eqref{boundedness}, we have
	\begin{align}
		&\e\left[\int_0^T\frac{1}{N}\left|\overline{f}(t,{X}_t^1,u_t,{X}_t^{1},X^0_t)-\overline{f}(t,\hat{X}_t^1,u_t,\bar{X}_t^1,\bar{X}_t^0)\right|dt\right] \notag\\
		\le\ & \frac{3L}{N}\e\left[\int_0^T\left(1+|{X}_t^1|^2+ |\hat{X}_t^1|^2+|u_t|^2+|{X}_t^{1}|^2+|\bar{X}_t^{1}|^2+|X^0_t|^2+|\bar{X}_t^{0}|^2\right) dt \right] \notag\\
		\le\ & \frac{C_1}{N}\left(1+\e\left[|\xi^0|^2+|\xi^1|^2+\int_0^T \left(|u_t|^2+|\bar{u}_t^{0}|^2+|\bar{u}_t^{1}|^2\right) dt\right]\right) \notag\\
		\le\ & \frac{C}{N} \left(1+\e[\int_0^T |u_t|^2 dt]\right). \label{v_8}
	\end{align}
	Plugging \eqref{v_7} and \eqref{v_8} (and a similar inequality for $\overline{g}$) into \eqref{v_6}, we know that 
	\begin{equation}\label{epsilon_thm_13}
		\begin{split}
			J_1(u|\bar{u}^{-1})\geq\ &\e\bigg[\int_0^T \frac{1}{N}\sum_{j=1}^N \overline{f}(t,\hat{X}_t^1,u_t,\bar{X}_t^j,\bar{X}_t^0)dt+\frac{1}{N}\sum_{j=1}^N \overline{g}(\hat{X}_T^1,\bar{X}_T^j,\bar{X}_T^0)\bigg]\\
			&-\frac{C}{\sqrt{N}}\bigg(1+\e\left[\int_0^T|u_t|^2 dt\right]\bigg).
		\end{split}
	\end{equation} 
	We set $Y_t^j:=\overline{f}(t,\hat{X}_t^1,u_t,\bar{X}_t^j,\bar{X}_t^0)-f(t,\hat{X}_t^1,u_t,m_t,\bar{X}_t^0)$ for $2\le j\le N$, then, from the definition of $m_t$ and the fact that $\f_t^j$ is independent from $\{W^1,\xi^1\}$, we have $\e[Y_t^j|\f_t^1]=0$. For $2\le j\neq j'\le N$, since $(W^1,W^j,W^{j'})$ are independent, we have $\e[Y_t^j Y_t^{j'}|\f_t^1]=\e[Y_t^j|\f_t^1]\e[Y_t^{j'}|\f_t^1]=0$, and then $\e[Y_t^j Y_t^{j'}]=0$. Since $\{Y_t^j,2\le j\le N\}$ are identically distributed, we have 
	\begin{equation}\label{4}
		\begin{split}
			&\e\bigg[\bigg|\frac{1}{N}\sum_{j=1}^N(\overline{f}(t,\hat{X}_t^1,u_t,\bar{X}_t^j,\bar{X}_t^0)-f(t,\hat{X}_t^1,u_t,m_t,\bar{X}_t^0))\bigg|^2\bigg]\\
			=\ &\frac{1}{N^2} \sum_{j=1}^N\e\left[|Y_t^j|^2\right]+\frac{1}{N^2}\sum_{1\le j\neq j'\le N}\e\left[Y_t^jY_t^{j'}\right]\\
			=\ &\frac{1}{N^2} \sum_{j=1}^N\e\left[|Y_t^j|^2\right]+\frac{2}{N^2}\sum_{j=2}^N\e\left[Y_t^1Y_t^{j}\right] \\
			\le\ & \frac{2}{N^2} \sum_{j=2}^N\e\left[|Y_t^j|^2\right]+\frac{1}{N}\e\left[|Y_t^1|^2\right] \\
			\le\ &  \frac{C}{N} \bigg(1+\e\left[\int_0^T|u_t|^2 dt\right]^2\bigg). 
		\end{split}
	\end{equation}
	Similarly, 
	\begin{equation}\label{v_9}
		\e\bigg[\bigg|\frac{1}{N}\sum_{j=1}^N\left(\overline{g}(\hat{X}^1_T,\bar{X}_T^j,\bar{X}_T^0)-g(\hat{X}^0_T,m_T,\bar{X}_T^0)\right)\bigg|^2\bigg] \le \frac{C}{N}  \bigg(1+\e\left[\int_0^T|u_t|^2 dt\right]^2\bigg).
	\end{equation}
	From \eqref{4} and \eqref{v_9}, we know that
	\begin{align}
		&\e\bigg[\int_0^T\frac{1}{N}\sum_{j=1}^N \overline{f}(t,\hat{X}_t^1,u_t,\bar{X}_t^j,\bar{X}_t^0)dt+\frac{1}{N}\sum_{j=1}^N \overline{g}(\hat{X}_T^1,\bar{X}_T^j,\bar{X}_T^0)\bigg] \notag\\
		\geq\ & \e\bigg[\int_0^T f(t,\hat{X}_t^1,u_t,m_t,\bar{X}_t^0)dt+ g(\hat{X}_T^1,m_T,\bar{X}_T^0)\bigg] \notag\\
		& -\frac{C}{\sqrt{N}}\bigg(1+\e\left[\int_0^T|u_t|^2 dt\right]\bigg) \notag\\
		=\ & J_1(u|\bar{u}^0,m)-\frac{C}{\sqrt{N}}\bigg(1+\e\left[\int_0^T|u_t|^2 dt\right]\bigg). \label{epsilon_thm_14}
	\end{align}
	From \eqref{epsilon_thm_13} and \eqref{epsilon_thm_14}, we know that
	\begin{equation}\label{epsilon_thm_15}
		\begin{split}
			J_1(u|\bar{u}^{-1})&\geq J_1(u|\bar{u}^0,m)-\frac{C}{\sqrt{N}}\e\left[\int_0^T|u_t|^2 dt\right]-\frac{C}{\sqrt{N}}.
		\end{split}
	\end{equation}
	In a similar way, we have
	\begin{equation}\label{epsilon_thm_16}
		\begin{split}
			J_1(\bar{u}^1|\bar{u}^{-1})\le J_1(\bar{u}^1|\bar{u}^0,m)+\frac{C}{\sqrt{N}}.
		\end{split}
	\end{equation}
	From Theorem~\ref{thm:smp} we know that	
	\begin{equation}\label{epsilon_thm_17}
		J_1(\bar{u}^1|\bar{u}^0,m)=\inf_{u\in \lr^2_{\f^1}(0,T)}J_1(u|\bar{u}^0,m).
	\end{equation}  
	From the following Lemma~\ref{lem:1}, we know that
	\begin{equation}\label{epsilon_thm_17.1}
		\begin{split}
			&J_1(u|\bar{u}^0,m)-\frac{C}{\sqrt{N}}\e\left[\int_0^T|u_t|^2 dt\right]\\
			\geq\ & \inf_{u\in \lr^2_{\f^1}(0,T)}\left\{J_1(u|\bar{u}^0,m)-\frac{C}{\sqrt{N}}\e\left[\int_0^T|u_t|^2 dt\right]\right\}\\
			\geq\ & \inf_{u\in \lr^2_{\f^1}(0,T)}J_1(u|\bar{u}^0,m)-\frac{C}{\sqrt{N}}.
		\end{split}
	\end{equation}      	
	Therefore, we know from \eqref{epsilon_thm_15}-\eqref{epsilon_thm_17.1} that
	\begin{equation*}\label{epsilon_thm_18}
		J_1(\bar{u}^1|\bar{u}^{-1})-\frac{C}{\sqrt{N}}\le J_1(u|\bar{u}^{-1}).
	\end{equation*}
\end{proof}

To complete the previous proof, it remains to state and prove the following lemma.

\begin{lemma}\label{lem:1}
	Let ${W}=\{W_t,0\le t\le T\}$ be a one-dimensional Brownian motion and $\xi$ be a square integrable random variable. Let $\f$ be the natural filtration of $(\xi,W)$. For a control $u\in\lr_{\f}^2(0,T)$, we consider the dynamical system
	\begin{align*}
		X^u_t:=\xi+\int_0^t b(s,X^u_s,u_s)ds+\int_0^t\sigma(s,X^u_s,u_s)dW_s,
	\end{align*}
	and the cost function
	\begin{align*}
		J(u):=\e\left[\int_0^T f(t,X^u_t,u_t) dt+g(X^u_T)\right],
	\end{align*}
	where coefficients $(b,\sigma,f,g)$ satisfy Assumptions (H1)-(H4) (except that they are independent of $m$ and $x^0$). Then, we have	
	\begin{equation}\label{lem1_0}
		\begin{split}
			\inf_{u\in \lr^2_{\f}(0,T)}\left\{J(u)-\kappa\e\left[\int_0^T|u_t|^2 dt\right]\right\}-\inf_{u\in \lr^2_{\f}(0,T)}J(u)& \le C \left(1+\e[|\xi|^2] \right)\kappa
		\end{split}
	\end{equation}    
	for a positive constant $C$  depending only on $(L,T)$.  
\end{lemma}

\begin{proof}
	For $\kappa\in[0,\frac{C_f}{2}]$, we denote by 
	\begin{align*}
		J^\kappa(u):=J(u)-\kappa\e\left[\int_0^T|u_t|^2 dt\right],\quad u\in \lr_{\f}^2(0,T).
	\end{align*}
	From \cite[Theorem 3.1]{HT} we know that $u^\kappa$ is the unique optimal control for problem $\inf_{u\in \lr^2_{\f}(0,T)}J^\kappa(u)$, where $(X^\kappa,p^\kappa;u^\kappa,q^\kappa)\in (\sr_{\f}^2)^2\times (\lr_{\f}^2)^2(0,T)$ is the unique solution of the following FBSDEs
	\begin{equation}\label{lem1_1}
		\left\{
		\begin{aligned}
			&dX^\kappa_t=b(t,X^\kappa_t,u^\kappa_t)dt+\sigma(t,X^\kappa_t,u^\kappa_t)dW_t,\quad t\in(0,T];\\
			&dp^\kappa_t= -\left[b_1(t)p^\kappa_t+\sigma_1(t)q^\kappa_t+f_x(t,X^\kappa_t,u^\kappa_t)\right] dt+q^\kappa_tdW_t,\quad t\in[0,T);\\
			&X^\kappa_0=\xi,\quad p^\kappa_T=g_x(X^\kappa_T),
		\end{aligned}
		\right.   
	\end{equation}
	with the optimal condition
	\begin{align}\label{lem1_2}
		b_2(t)p_t^\kappa+\sigma_2(t)q_t^\kappa+f_u(t,X^\kappa_t,u^\kappa_t)-2\kappa u^\kappa_t=0,\quad t\in[0,T].
	\end{align}
	The existence and uniqueness of solutions of FBSDEs \eqref{lem1_1}-\eqref{lem1_2} is an immediate consequence of \cite[Theorem 2.3]{SP}. From Assumptions (H1)-(H2) and standard estimates for SDEs and BSDEs, we have
	\begin{equation}\label{lem1_6}
		\begin{split}
			&\e\left[\sup_{0\le t\le T}|( X^\kappa_t, p^\kappa_t)|^2+\int_0^T | q^\kappa_t|^2 dt\right]\le C\e\left[1+|\xi|^2+\int_0^T  | u^\kappa_t|^2 dt\right].
		\end{split}
	\end{equation}
	Here and in the rest of the proof, $C$ always stands for a constant depending only on $(L,T)$, which may vary from line to line. Applying It\^o's lemma on $p^\kappa_t X^\kappa_t$ and taking expectation, we have
	\begin{equation}\label{lem1_3}
		\begin{split}
			&\e\left[p^\kappa_T X^\kappa_T\right]-\e\left[ p^\kappa_0\xi\right]\\
			=\ & \e\left[\int_0^T [b_0(t) p^\kappa_t+\sigma_0(t) q^\kappa_t-f_{x}(t,X_t^\kappa,u_t^\kappa)X^\kappa_t-f_{u}(t,X_t^\kappa,u_t^\kappa)u^\kappa_t+2\kappa|u_t^\kappa|^2 ]dt\right].
		\end{split}
	\end{equation}
	From Assumption (H3), we have
	\begin{equation}\label{lem1_4}
		\begin{split}
			&f_{x}(t,X_t^\kappa,u_t^\kappa)X^\kappa_t+f_{u}(t,X_t^\kappa,u_t^\kappa)u^\kappa_t \geq f_{x}(t,0,0)X^\kappa_t+f_{u}(t,0,0)u^\kappa_t+2C_f|u_t^\kappa|^2,\\
			&g_x(X^\kappa_T) X^\kappa_T \geq g_x(0) X^\kappa_T.
		\end{split}
	\end{equation}
	Plugging \eqref{lem1_4} into \eqref{lem1_3}, from Assumption (H2), we have for $\epsilon\in(0,1)$,
	\begin{equation}\label{lem1_5}
		\begin{split}
			&2(C_f-\kappa)\e\left[\int_0^T |u_t^\kappa|^2 dt\right]\\
			\le\ & \e\left[p^\kappa_0\xi-g_x(0) X^\kappa_T+\int_0^T \left[b_0(t) p^\kappa_t+\sigma_0(t) q^\kappa_t-f_{x}(t,0,0)X^\kappa_t-f_{u}(t,0,0)u^\kappa_t \right]dt\right]\\
			\le\ & \frac{C}{\epsilon}\left(1+\e\left[|\xi|^2\right]\right)+\epsilon\e\bigg[\sup_{0\le t\le T}|(X^\kappa_t,p^\kappa_t)|^2+\int_0^T |(u^\kappa_t,q^\kappa_t)|^2dt\bigg].
		\end{split}
	\end{equation}	
	Plugging \eqref{lem1_6} into \eqref{lem1_5} and choosing $\epsilon$ small enough, from the fact that $\kappa\le \frac{C_f}{2}$ and $C_f\geq L^{-1}$, we have 
	\begin{equation}\label{lem1_7}
		\begin{split}
			&\e\left[\int_0^T |u_t^\kappa|^2 dt\right]\le C\left(1+\e\left[|\xi|^2\right]\right).
		\end{split}
	\end{equation}
	Plugging \eqref{lem1_7} back into \eqref{lem1_6}, we have
	\begin{equation}\label{lem1_8}
		\e\bigg[\sup_{0\le t\le T}|(X^\kappa_t,p^\kappa_t)|^2+\int_0^T |(u^\kappa_t,q^\kappa_t)|^2dt\bigg]\le C\left(1+\e\left[|\xi|^2\right]\right).
	\end{equation}
	That is, the norm of $(X^\kappa,u^\kappa,p^\kappa,q^\kappa)$ is bounded by a constant independent of $\kappa$ for $\kappa\in[0,\frac{C_f}{2}]$. We denote by $(\hat{X},\hat{u},\hat{p},\hat{q})$ the solution of FBSDEs \eqref{lem1_1}-\eqref{lem1_2} when $\kappa=0$, and denote by 
	\begin{align*}
		(\de X^\kappa,\de u^\kappa,\de p^\kappa,\de q^\kappa):=(X^\kappa-\hat{X},u^\kappa-\hat{u},p^\kappa-\hat{p},q^\kappa-\hat{q}).
	\end{align*}
	Then, $(\de X^\kappa,\de u^\kappa,\de p^\kappa,\de q^\kappa)$ satisfy the following FBSDEs
	\begin{equation*}
		\left\{
		\begin{aligned}
			&d\de X^\kappa_t=\left[b_1(t)\de X^\kappa_t+b_2(t)\de u^\kappa_t\right]dt+\left[\sigma_1(t)\de X^\kappa_t+\sigma_2(t)\de u^\kappa_t\right]dW_t,\quad t\in(0,T];\\
			&d\de p^\kappa_t= -[b_1(t)\de p^\kappa_t+\sigma_1(t)\de q^\kappa_t+f_x(t,X^\kappa_t,u^\kappa_t)-f_x(t,\hat{X}_t,\hat{u}_t)] dt\\
			&\qquad\qquad +\de q^\kappa_tdW_t,\quad t\in[0,T);\\
			&\de X^\kappa_0=0,\quad \de p^\kappa_T=g_x(X^\kappa_T)-g_x(\hat{X}_T),
		\end{aligned}
		\right.   
	\end{equation*}
	with the optimal condition
	\begin{align}\label{lem1_10}
		b_2(t)\de p_t^\kappa+\sigma_2(t)\de q_t^\kappa+f_u(t,X^\kappa_t,u^\kappa_t)-f_u(t,\hat{X}_t,\hat{u}_t)-2\kappa u^\kappa_t=0,\quad t\in[0,T].
	\end{align}
	From Assumptions (H1)-(H2) and standard estimates for SDEs and BSDEs, we have
	\begin{equation}\label{lem1_14}
		\begin{split}
			&\e\left[\sup_{0\le t\le T}|(\de X^\kappa_t, \de p^\kappa_t)|^2+\int_0^T | \de q^\kappa_t|^2 dt\right]\le C\e\left[\int_0^T  |\de u^\kappa_t|^2 dt\right].
		\end{split}
	\end{equation}
	Applying It\^o's lemma on $\de p^\kappa_t \de X^\kappa_t$ and taking expectation, from \eqref{lem1_10}, we have
	\begin{equation}\label{lem1_9}
		\begin{split}
			&\e\left[\de p^\kappa_T \de X^\kappa_T\right]\\
			=\ & \e\bigg[\int_0^T 2\kappa u_t^\kappa \de u^\kappa_t-\big[(f_{x},f_u)(t,X_t^\kappa,u_t^\kappa)-(f_x,f_u)(t,\hat{X}_t,\hat{u}_t)\big]\cdot(\de X^\kappa_t,\de u^\kappa_t) dt\bigg].
		\end{split}
	\end{equation}
	From Assumption (H3), we have
	\begin{equation}\label{lem1_11}
		\begin{split}
			&\big[(f_{x},f_u)(t,X_t^\kappa,u_t^\kappa)-(f_x,f_u)(t,\hat{X}_t,\hat{u}_t)\big]\cdot(\de X^\kappa_t,\de u^\kappa_t) \geq 2C_f|\de u_t^\kappa|^2,\\
			&\big(g_x(X^\kappa_T)-g_x(\hat{X}_T)\big) \de X^\kappa_T \geq 0.
		\end{split}
	\end{equation}
	Plugging \eqref{lem1_11} into \eqref{lem1_9}, from Cauchy's inequality, we have
	\begin{equation}\label{lem1_12}
		\begin{split}
			C_f \e\left[\int_0^T |\de u_t^\kappa|^2 dt\right]&\le \kappa \e\left[\int_0^T  |u_t^\kappa| |\de u^\kappa_t| dt\right]\\
			&\le \kappa \bigg(\e\left[\int_0^T |u_t^\kappa|^2 dt\right]\bigg)^{\frac{1}{2}}\bigg(\e\left[\int_0^T |\de u_t^\kappa|^2 dt\right]\bigg)^\frac{1}{2}.
		\end{split}
	\end{equation}
	Plugging \eqref{lem1_7} into \eqref{lem1_12}, we have
	\begin{equation}\label{lem1_13}
		\begin{split}
			\e\left[\int_0^T |\de u_t^\kappa|^2 dt\right]&\le \kappa^2 C\left(1+\e\left[|\xi|^2\right]\right).
		\end{split}
	\end{equation}
	Plugging \eqref{lem1_13} back into \eqref{lem1_14}, we have
	\begin{equation}\label{lem1_15}
		\begin{split}
			&\e\left[\sup_{0\le t\le T}\left|(\de X^\kappa_t, \de p^\kappa_t)\right|^2+\int_0^T \left|(\de u^\kappa_t, \de q^\kappa_t)\right|^2 dt\right]\le \kappa^2 C\left(1+\e\left[|\xi|^2\right]\right).
		\end{split}
	\end{equation}
	From Assumption (H2), \eqref{lem1_7}, \eqref{lem1_8}, and \eqref{lem1_13}, we have that
	\begin{align*}
		&|J^\kappa(u^\kappa)-J(\hat{u})| \\
		\le\ & |J(u^\kappa)-J(\hat{u})|+\kappa\e\left[\int_0^T|u^\kappa_t|^2 dt\right]\\
		=\ & \e\left[\int_0^T \left( f(t,X^\kappa_t,u^\kappa_t)-f(t,\hat{X}_t,\hat{u}_t)\right) dt+g(X^\kappa_T)-g(\hat{X}_T)+\kappa \int_0^T|u^\kappa_t|^2 dt\right]\\
		\le\ & 2L\e\bigg[\int_0^T \left(1+|X^\kappa_t|+|\hat{X}_t|+|u^\kappa_t|+|\hat{u}_t|\right)\left(|\de X^\kappa|+|\de u^\kappa|\right) dt \\
		&\qquad + \left(1+|X^\kappa_T|+|\hat{X}_T|\right) |\de X^\kappa|+\kappa \int_0^T|u^\kappa_t|^2 dt\bigg]\\
		\le\ & C\left(1+\e\left[|\xi|^2\right] \right) \kappa,
	\end{align*}
	which yields~\eqref{lem1_0}. 
\end{proof}

\footnotesize



\end{document}